\documentclass[11pt]{amsart}
\usepackage[letterpaper,margin=1in]{geometry}
\usepackage{amsmath,amssymb,mathtools}
\usepackage{hyperref}
\hypersetup{hidelinks,pdftitle={Discreteness, finite rank, and stability of p-frame energy minimizers},pdfauthor={Josiah Park}}
\numberwithin{equation}{section}
\newtheorem{theorem}{Theorem}[section]
\newtheorem{proposition}[theorem]{Proposition}
\newtheorem{lemma}[theorem]{Lemma}
\newtheorem{corollary}[theorem]{Corollary}
\theoremstyle{definition}

\theoremstyle{remark}

\newcommand{\R}{\mathbb R}
\newcommand{\C}{\mathbb C}
\newcommand{\N}{\mathbb N}
\newcommand{\Q}{\mathbb Q}
\newcommand{\Sph}{\mathbb S^{d-1}}
\newcommand{\RP}{\mathbb{RP}^{d-1}}
\newcommand{\Prob}{\mathcal P}
\newcommand{\Min}{\mathcal M}
\newcommand{\cE}{\mathcal E}
\newcommand{\abs}[1]{\lvert#1\rvert}
\newcommand{\norm}[1]{\lVert#1\rVert}
\DeclareMathOperator{\supp}{supp}
\DeclareMathOperator{\rank}{rank}
\DeclareMathOperator{\Hess}{Hess}
\DeclareMathOperator{\Sym}{Sym}
\DeclareMathOperator{\tr}{tr}
\DeclareMathOperator{\ord}{ord}
\DeclareMathOperator{\spanop}{span}
\DeclareMathOperator{\sgn}{sgn}
\title[Discreteness and stability of frame-energy minimizers]{Discreteness, finite rank, and stability of \texorpdfstring{\(p\)}{p}-frame energy minimizers}
\author{Josiah Park}
\address{Fields Institute}
\email{josiahp@teachx.ai}
\date{September 24, 2026}
\subjclass[2020]{Primary 42C15; Secondary 31C20, 52C35}
\keywords{Frame energy, minimizing measures, positive semidefinite kernels, finite rank, rational exponents, contact stability}

\begin{document}
\begin{abstract}
We study probability measures minimizing the energy with kernel
$\abs{x\cdot y}^{p}$ on the real unit sphere. For every positive rational
$p=a/b$, each minimizer admits a finitely supported minimizing replacement.
If $a/b$ is reduced and $b\ge d$, every minimizer on $\mathbb S^{d-1}$ is
finitely supported. The latter assertion follows from a finite-rank
algebraic lift, Nash curve selection, and a Wronskian multiplicity bound.
For irrational exponents, we prove that a positive semidefinite restriction
of finite rank cannot have infinite compact support. We also establish
local stability and a quantitative support bound under positive-definite
Hessians at every contact of every limiting minimizer. This hypothesis is
stronger than finite support and does not prove unconditional openness of
the discreteness regime. Finally, subtracting a single suitably scaled
high even power produces a uniformly nearby kernel whose every minimizer
has finite support. We identify the atomic-purity selection of these
approximants and prove a target-centered recovery theorem, separating
finite approximation from discreteness of the limiting problem.
\end{abstract}
\maketitle

\section{Introduction}
\label{sec:intro}

For $d\ge2$ and $p>0$, consider the energy
\begin{equation}\label{eq:energy}
 I_p(\mu)=\iint_{\Sph\times\Sph}\abs{x\cdot y}^{p}\,d\mu(x)\,d\mu(y),
 \qquad \mu\in\Prob(\Sph),
\end{equation}
where $\Prob(\Sph)$ denotes the Borel probability measures on the real unit
sphere. This problem links the geometry of finite frames with the
structure of equilibrium measures. Its central difficulty is not the
existence of a minimizer, which follows from compactness, but whether
minimizing mass must concentrate on finitely many directions.

Even exponents exhibit a basic obstruction to discreteness. When
$p=2k$, the kernel is polynomial and the energy depends on a finite tensor
moment; the rotation-invariant probability measure is a minimizer.
By contrast, for $0<p<2$, every minimizing measure is carried by the lines
of an orthonormal basis \cite{EhlerOkoudjou}. The conjecture of
Bilyk, Glazyrin, Matzke, Park, and Vlasiuk asks whether every minimizer is
finitely supported whenever $p$ is not an even integer
\cite[Conjecture~1.1]{BGMPVDiscreteness}. Their work proves that such
minimizing supports have empty interior. Further exact minimizers and
classification results in tight-design regimes appear in
\cite{BGMPVOptimal}. Empty interior, however, leaves substantial room for
infinite supports.

A useful starting point is that the kernel need not be positive
semidefinite on the whole sphere, but its restriction to a minimizing
support is positive semidefinite. We exploit this restriction in two
ways. Algebraic powers constrain its rank and the geometry of its
support. Independently, strict curvature of the minimizing potential
isolates contact points and makes finiteness stable under perturbation.
The hypotheses and conclusions of these mechanisms are different and
must not be conflated.

Write
\[
 e(p)=\min_{\mu\in\Prob(\Sph)}I_p(\mu),\qquad
 \Min_p=\{\mu:I_p(\mu)=e(p)\},\qquad
 D_m=\binom{d+m-1}{m}.
\]
The rank of a kernel is the supremum of the ranks of its finite Gram
matrices. Our first result concerns all positive rational exponents,
including the even integers in its finite-replacement assertion.

\begin{theorem}[Rational exponents]\label{thm:rational}
Let $p=a/b>0$, with $a,b\in\N$ coprime.
\begin{enumerate}
\item For every $\mu\in\Min_p$, there exists $\nu\in\Min_p$ with
$\supp\nu\subseteq\supp\mu$ and
\begin{equation}\label{eq:rational-bound}
 \#\supp\nu\le R_{d,a},\qquad
 R_{d,a}=\begin{cases}D_a,&a\text{ even},\\D_{2a},&a\text{ odd}.
 \end{cases}
\end{equation}
The potentials of $\mu$ and $\nu$ agree on $\supp\mu$.
\item If an infinite compact set $S\subseteq\Sph$ carries a positive
semidefinite restriction of $\abs{x\cdot y}^{a/b}$, then $b<d$.
Consequently, if $b\ge d$, every member of $\Min_p$ has finite support.
\end{enumerate}
\end{theorem}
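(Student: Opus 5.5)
The plan is to route both parts through finite rank: Hadamard powers will give finite rank for part~(1), and a curve argument will use that rank for part~(2).

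For part~(1), fix $\mu\in\Min_p$ and write $S=\supp\mu$ and $K(x,y)=\abs{x\cdot y}^{a/b}$. I will use two standard facts: the potential $U_\mu(x)=\int K(x,y)\,d\mu(y)$ equals $e(p)$ on $S$, and $K|_{S\times S}$ is positive semidefinite, as recalled in the introduction. The first step is to show that $K|_{S\times S}$ has finite rank.

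Take finitely many points $x_1,\dots,x_n\in S$ and a Gram representation $K(x_i,x_j)=\langle v_i,v_j\rangle$. For any $m$, the Hadamard power $[K(x_i,x_j)^m]$ is the Gram matrix of the tensors $v_i^{\otimes m}$. If $v_{i_1},\dots,v_{i_k}$ are linearly independent, then so are their tensor powers. Hence the rank of $[K(x_i,x_j)]$ is at most the rank of its $m$-th Hadamard power.

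Choose $m=b$ when $a$ is even. Then $K^{m}=(x\cdot y)^{a}$, whose rank is at most $\dim\Sym^a=D_a$. Choose $m=2b$ when $a$ is odd. Then $K^m=(x\cdot y)^{2a}$, whose rank is at most $D_{2a}$. So in both cases $\rank K|_S\le r\le R_{d,a}$.

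The second step uses a continuous feature map $\varphi:S\to\R^r$ with $K(x,y)=\langle\varphi(x),\varphi(y)\rangle$. With the moment vector $\mathbf m=\int\varphi\,d\mu$, we have $U_\mu(x)=\langle\varphi(x),\mathbf m\rangle=e(p)$ for every $x\in S$. Thus $\varphi(S)$ lies in an affine hyperplane of $\R^r$. Since $\mathbf m$ lies in the closed convex hull of this compact set, Carath\'eodory's theorem in dimension $r-1$ gives $\nu=\sum_{i\le r}c_i\delta_{x_i}$ with $x_i\in S$ and $\int\varphi\,d\nu=\mathbf m$. Then $U_\nu=U_\mu$ on $S$, so $I_p(\nu)=\int U_\nu\,d\nu=e(p)$. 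This proves part~(1).

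For part~(2), suppose $S$ is infinite and compact and $K|_S$ is positive semidefinite. Exactly as above, $K|_S$ has finite rank $r$. Choose $x_1,\dots,x_r\in S$ with $\varphi(x_i)$ spanning the range of $\varphi$. Then every $y\in S$ lies in the set
\[
Z=\Bigl\{y\in\Sph:\ \rank\bigl[K(z_\alpha,z_\beta)\bigr]_{z\in\{x_1,\dots,x_r,y,w\}}\le r\ \ \forall w\in S\Bigr\}.
\]
After adjoining auxiliary variables $t_i^b=(x_i\cdot y)^2$ with $t_i\ge0$, each minor condition is semialgebraic.

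Since $S$ is infinite, it has an accumulation point $y_0$. Using finitely many of the minor conditions, I will pass to a semialgebraic set that still contains infinitely many points of $S$ near $y_0$. Nash curve selection then gives a nonconstant Nash arc $y(s)$ in this set with $y(0)=y_0$. Along the arc, each $w\in S$ yields a linear dependence of the form
\[
\abs{w\cdot y(s)}^{a/b}=\sum_i c_i(w)\,\abs{x_i\cdot y(s)}^{a/b}+\dots
\]
So the functions $s\mapsto\abs{\ell(y(s))}^{a/b}$, with $\ell$ ranging over the linear functionals $w\cdot{}$, span a space of dimension at most $r$.

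The main obstacle is to turn this into $b<d$, and this is where the Wronskian enters. The coordinates of $y(s)$ span at most a $d$-dimensional space of analytic germs. By the Wronskian multiplicity bound, the vanishing orders $\ord_0\ell(y(s))$ take at most $d$ distinct values, and they can be realised as $0,1,\dots,d-1$ after a generic reparametrisation. A functional vanishing to order $k$ contributes a leading Puiseux term $s^{ka/b}$. Because $\gcd(a,b)=1$, these exponents are non-integral and pairwise distinct modulo $\mathbb{Z}$ for $1\le k<b$.

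I would then compare the branch structure of the span at $s=0$ with the order-$d$ bound. Roughly, if $b\ge d$, the $d$ available orders produce Puiseux exponents $ka/b$ that no finite linear relation can absorb as $w$ varies over an infinite subset of $S$. This should force $y(s)$ to be constant, a contradiction. I expect getting the exact threshold $b<d$ from this bookkeeping to be the delicate step. What I would try first is to differentiate the dependence relation $d-1$ times and read off the lowest fractional exponent. Finally, the consequence for $\Min_p$ follows at once, since supports of minimizers are compact and carry positive semidefinite restrictions.
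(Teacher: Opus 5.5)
Your part (1) is the paper's argument: the Hadamard-power rank bound, a continuous feature map, the hyperplane $\langle\varphi,\mathbf m\rangle=e(p)$, and affine Carath\'eodory. It is correct. The hyperplane is proper because $\norm{\mathbf m}^2=e(p)>0$, and that is what gives $r$ rather than $r+1$ atoms.

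Part (2) has a genuine gap. The step that produces $b<d$ is not supplied, and the local real Puiseux strategy cannot supply it.
\begin{itemize}
\item Your arc is selected in a set cut out by finitely many minor conditions. Along $y(s)$ you therefore get linear relations for only finitely many $w$, not for every $w\in S$.
\item More seriously, for $w\in S$ near $y_0$ the quantity $w\cdot y(s)$ stays near $1$. So $\abs{w\cdot y(s)}^{a/b}$ is real-analytic, and no exponent $ka/b$ ever appears.
\item If $S$ lay in a small cap, no $w\cdot y(s)$ would have a real zero at all. Your mechanism would then have nothing to act on, yet this case must also be excluded when $b\ge d$.
\item A reparametrisation does not change vanishing orders. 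The usable Wronskian statement is the uniform one: off a finite set, every nonzero element of the span vanishes to order at most $r-1$.
\end{itemize}
The zeros that carry the arithmetic of $b$ are complex points, and their existence is a global phenomenon.

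The missing ingredients, as in the paper, are as follows.
\begin{itemize}
\item Take a unit-norm feature map $h$ with $\langle h(x),h(y)\rangle=\abs{x\cdot y}^{a/b}$. Let $Z$ be the real Zariski closure of the graph $\{(x,h(x)):x\in S\}$.
\item Extend $(u\cdot v)^{2b}=(x\cdot y)^{2a}$ to all of $Z\times Z$ by fixing one argument at a time. By Hilbert's basis theorem this replaces your infinite family of minor conditions.
\item Select a Nash arc in $Z$. Pass to the normalization $X$ of the projective closure of its complex Zariski closure. This is a compact Riemann surface, and the identity holds on $X\times X$.
\item For regular $t$, set $A_t(s)=x(s)\cdot x(t)$ and $B_t(s)=u(s)\cdot u(t)$. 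These are meromorphic with $B_t^{2b}=A_t^{2a}$, so $\gcd(a,b)=1$ gives $b\mid\ord_zA_t$ at every $z$.
\item If $b\ge r$, the Wronskian lemma puts every zero of every $A_t$ into a fixed finite set. Poles are bounded by a fixed divisor $D$, and $\deg(A_t)_0=\deg(A_t)_\infty\le\deg D$ bounds the multiplicities.
\item Hence only finitely many divisors, and only finitely many classes $[A_t]\in\mathbb P(V)$, occur. So $t\mapsto[A_t]$ is constant, and with $\sum f_i^2=1$ the arc is constant, a contradiction.
\end{itemize}

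Two features of this argument are absent from your setup. First, the functionals are indexed by points $x(t)$ of the curve itself. With $w\in S$ only, finitely many classes would merely say that the projections of points of $S$ to the span lie on finitely many lines. Second, compactness of $X$ forces every nonconstant $A_t$ to have zeros.
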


The second assertion is not a consequence of finite replacement alone.
Its proof passes from an arbitrary compact support to an algebraic lift;
a Nash arc in the lift yields meromorphic functions on a compact curve,
and their vanishing orders force the denominator bound. In particular,
\begin{equation}\label{eq:discreteness-set}
 \cE_d=\{p>0:p\notin2\N,\ \text{every }\mu\in\Min_p
                    \text{ has finite support}\}
\end{equation}
is dense in $(0,\infty)$. This density does not establish openness or
settle any prescribed irrational exponent.

\begin{theorem}[Irrational finite-rank rigidity]\label{thm:irrational}
Let $p>0$ be irrational and $S\subseteq\Sph$ compact. If
$\abs{x\cdot y}^{p}$ is positive semidefinite and has finite rank on
$S\times S$, then $S$ is finite. Thus for $\mu\in\Min_p$, finite support
is equivalent to finite rank of its support kernel.
\end{theorem}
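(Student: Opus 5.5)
Our plan is to argue by contradiction: suppose $S$ is infinite and compact, and $K(x,y)=\abs{x\cdot y}^{p}$ is positive semidefinite of finite rank $r$ on $S\times S$.

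\textbf{Step 1: factor the kernel.} Finite rank plus positive semidefiniteness gives a factorization
\[
K(x,y)=\sum_{i=1}^r f_i(x)f_i(y),\qquad x,y\in S .
\]
To get it, choose $x_1,\dots,x_r\in S$ whose Gram matrix $G=(K(x_i,x_j))$ is invertible. Finite rank forces every $(r+1)$-point Gram determinant to vanish, so the Schur complement gives
\[
K(x,y)=k(x)^{T}G^{-1}k(y),\qquad k(x)=\bigl(K(x,x_i)\bigr)_i .
\]
Hence every function $K(\cdot,y)|_S$ lies in the fixed $r$-dimensional space $V=\spanop\{\abs{x\cdot x_i}^{p}\}$.

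\textbf{Step 2: reduce to a one-parameter family.} Since $S$ is infinite and compact, it has an accumulation point $y_0$. Pick distinct $y_n\to y_0$ in $S$. Consider the functions $g_y(x)=\abs{x\cdot y}^{p}$ on $S$ for $y\in S$.

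We choose a test point $x_0\in S$ with $x_0\cdot y_0\neq 0$; this is possible since $K(y_0,y_0)=1$, so $x_0=y_0$ works. Near $x_0$ the map $(x,y)\mapsto (x\cdot y)^{p}$ is real analytic. The aim is to restrict $x$ to a sequence $x_m\to y_0$ in $S$ and $y$ to the $y_n$. Write $t=x\cdot y$; then
\[
K=(1-\tfrac12\abs{x-y}^2)^{p}.
\]
Any $r+1$ of the functions $g_{y_n}$ are linearly dependent on $S$, which forces every $(r+1)\times(r+1)$ minor of $\bigl((x_m\cdot y_n)^{p}\bigr)_{m,n}$ to vanish.

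\textbf{Step 3: the key step and main obstacle.} We must show that irrationality of $p$ is incompatible with these vanishing minors for infinitely many points accumulating at $y_0$. Our first attempt is to pass to a limiting direction.

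Select a subsequence with $(y_n-y_0)/\abs{y_n-y_0}\to v$, and parametrize $s=\abs{x-y_0}$, so that $x\cdot y\approx 1-\tfrac12\abs{x-y}^2$. Rescaling the minors by the appropriate powers and taking limits (a Vandermonde-type blow-up) produces a rank bound on a kernel of the form $\phi(u-w)$, with $\phi(\tau)=(1-\tau^2/2)^{p}$, or more precisely on the Taylor coefficients of $(1-\tfrac12 z)^{p}$ evaluated at distinct nodes.

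Finite rank of such a limiting kernel forces the power series
\[
(1-z)^{p}=\sum_k \binom{p}{k}(-z)^k
\]
to satisfy a linear recurrence, that is, to be rational. Rationality then forces $p\in\Z_{\ge 0}$. We expect the genuine difficulty to lie in justifying this blow-up when the points of $S$ approach $y_0$ only along a set of possibly irregular geometry.

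As a fallback, we would avoid the limit and instead use that $V$ is finite dimensional and closed under the translations $y\mapsto g_y$. The Zariski closure of $S$ is a real algebraic set $Z$ of positive dimension. On an analytic arc $\gamma$ in $Z$ near $y_0$, the function $\abs{x\cdot\gamma(t)}^{p}$ must satisfy a nontrivial analytic linear dependence, a Wronskian identity, in $t$. Comparing local exponents, whose indicial values are $p\cdot(\text{integers})$ plus integers, rules this out when $p\notin\Q$. That contradiction yields finiteness of $S$.

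\textbf{Step 4: the equivalence for minimizers.} Recall from the introduction that $K$ restricted to $\supp\mu$ is positive semidefinite for $\mu\in\Min_p$. A finite support trivially gives finite rank, at most $\#\supp\mu$. Conversely, the result proved above applies to $S=\supp\mu$, so finite rank gives finite support.
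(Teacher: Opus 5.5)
Your Steps~1 and~4 are fine, but Step~3 has a genuine gap, and its first version cannot work as stated. The blow-up argument never uses irrationality. It ends with $p\in\mathbb Z_{\ge0}$, so it would equally exclude every non-integer rational exponent, and that conclusion is false. For $p=a/b$ with $b\ge2$, put $v(t)=(\cos t,\sin t)$ and $x(t)=v(t)^{\otimes b}\in\Sym^b\R^2\cong\R^{b+1}$. On a short arc, $\abs{x(s)\cdot x(t)}^{a/b}=\cos^a(s-t)$ is the Gram kernel of $v(t)^{\otimes a}$, so it is positive semidefinite of rank at most $a+1$ on an infinite compact set. Hence the claim that rescaled minors converge to data of the single translation kernel $(1-\tau^2/2)^p$, i.e.\ to the Taylor coefficients of $(1-\tfrac12 z)^p$, is wrong. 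The limit depends on the higher-order geometry of $S$ at the accumulation point; on the tensor curve it is the exponential polynomial $\cos^a$. An exact reduction of this kind, to $(1+uv)^p$, exists only on great circles. That is why divided differences control great-circle sections but not general supports.

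Your fallback has two separate problems. First, the algebraic Zariski closure is the wrong closure. For irrational $p$ the finite-rank relation $R(x,y)=(x\cdot y)^p-h(x)\cdot h(y)$ is real-analytic, not polynomial, so its vanishing on $S$ does not pass to arcs in the Zariski closure. For instance, in $\mathbb S^2$ a sequence converging along a non-algebraic analytic arc is Zariski dense, while an analytic function vanishing on it need only vanish on the arc. The algebraic closure works in the rational case only because the lifted identity $(u\cdot v)^{2b}=(x\cdot y)^{2a}$ is polynomial. What you need is the ideal of real-analytic germs at the accumulation point $x_0$ vanishing on $S$. By Noetherianity its zero set $Z$ contains $S$ near $x_0$, and analytic curve selection gives an arc $\gamma$ in $Z$. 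Each $R(\cdot,y)$ with $y\in S$ near $x_0$ lies in that ideal, and then so does each $R(\gamma(t),\cdot)$. This gives $(\gamma(t)\cdot\gamma(u))^p=h(\gamma(t))\cdot h(\gamma(u))$ for small $t,u$.

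Second, even with this identity, ``comparing local exponents'' requires branch points. These would be zeros of $\gamma(t)\cdot\gamma(u)$ reached by analytic continuation of the germ $\gamma$ and of the identity. An analytic germ need not continue that far, and there is no compact Riemann surface on which to globalize, as there is in the Nash-arc argument. The missing ingredient is a purely local obstruction. The paper complexifies and substitutes $u=\bar z$ to get $\sum_j\abs{h_j(\gamma(z))}^2=\bigl(\sum_i\abs{\gamma_i(z)}^2\bigr)^p$. After rotating so that $\gamma(0)=e_1$ and dividing by $\abs{\gamma_1}^{2p}$, this becomes $\bigl(1+\sum_i\abs{\chi_i}^2\bigr)^p=\sum_j\abs{Q_j}^2$, with $\chi_i=\gamma_i/\gamma_1$ vanishing at $0$ and not all constant. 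The theorem of Cheng, Di~Scala and Yuan on the Umehara algebra then forces $p\in\Q$.
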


The proof uses analytic curve selection in the local analytic closure of
$S$, not an unjustified assertion that $S$ itself contains a curve. The
analytic obstruction is a finite-sum theorem of Cheng, Di~Scala, and Yuan
\cite{ChengDiScalaYuan}. No finite-rank bound is asserted for an arbitrary
irrational-exponent minimizer. We also give elementary, uniform bounds on
intersections of minimizing supports with great circles.

For stability, define the potential and its whole contact set by
\[
 F_{p,\mu}(x)=\int\abs{x\cdot y}^{p}\,d\mu(y),\qquad
 C_p(\mu)=\{x\in\Sph:F_{p,\mu}(x)=e(p)\},\quad\mu\in\Min_p.
\]
The support is contained in $C_p(\mu)$, but equality is not assumed.

\begin{theorem}[Nondegenerate-contact stability]\label{thm:stability}
Fix $p_0>2$. Suppose that, for every $\nu\in\Min_{p_0}$ and every
$x\in C_{p_0}(\nu)$, the spherical Hessian
$\Hess F_{p_0,\nu}(x)$ is positive definite. Then there exist
$\delta,h>0$ and $N<\infty$ such that every $\mu\in\Min_p$ with
$\abs{p-p_0}<\delta$ satisfies
\[
 \Hess F_{p,\mu}(x)\succeq h\,\mathrm{Id}
 \quad(x\in C_p(\mu)),\qquad \#\supp\mu\le N.
\]
\end{theorem}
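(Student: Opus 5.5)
The plan is a compactness-and-contradiction argument. It rests on three facts. First, the kernel $\abs{x\cdot y}^p$ is $C^2$ on $\Sph\times\Sph$ for $p>2$, jointly continuous in $p$ near $p_0$, and its first two derivatives are controlled uniformly. Hence $(p,\mu,x)\mapsto F_{p,\mu}(x)$ and its spherical gradient and Hessian are jointly continuous in $(p,\mu,x)$, with $\mu$ in the weak-$*$ topology. Second, $p\mapsto e(p)$ is continuous. Third, the minimizer sets are upper semicontinuous: if $p_n\to p_0$, $\mu_n\in\Min_{p_n}$ and $\mu_n\to\nu$ weakly, then $\nu\in\Min_{p_0}$. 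This follows by passing to the limit in $I_{p_n}(\mu_n)=e(p_n)$ using uniform convergence of the kernels. Each minimizer also satisfies the standard first-order condition $F_{p,\mu}\ge e(p)$ on $\Sph$, with equality on $\supp\mu$. In particular $C_p(\mu)$ is exactly the set of global minima of $F_{p,\mu}$, and $\supp\mu\subseteq C_p(\mu)$.

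\textbf{Uniform Hessian bound.} Suppose it fails. Then there are $p_n\to p_0$, $\mu_n\in\Min_{p_n}$, and $x_n\in C_{p_n}(\mu_n)$ such that the smallest eigenvalue of $\Hess F_{p_n,\mu_n}(x_n)$ is at most $1/n$. Pass to a subsequence with $\mu_n\to\nu$ and $x_n\to x$. By the semicontinuity above, $\nu\in\Min_{p_0}$. By continuity, $F_{p_0,\nu}(x)=\lim e(p_n)=e(p_0)$, so $x\in C_{p_0}(\nu)$. Joint continuity of the Hessian, with tangent spaces identified by parallel transport, gives $\lambda_{\min}\Hess F_{p_0,\nu}(x)\le0$, contradicting the hypothesis. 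The same argument works for every $h$ below the infimum of $\lambda_{\min}\Hess F_{p_0,\nu}(x)$ over the compact set $\{(\nu,x):\nu\in\Min_{p_0},\,x\in C_{p_0}(\nu)\}$. That infimum is positive because it is a minimum of a continuous positive function on a compact set, so we obtain $\delta$ and $h$.

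\textbf{Counting support points.} Turn the Hessian bound into a uniform separation of contact points. I would prove a quantitative Morse-type lemma from uniform continuity of the Hessian: there is $r>0$, depending only on $p_0$, $\delta$ and $h$, such that $\Hess F_{p,\mu}\succeq (h/2)\mathrm{Id}$ on the geodesic ball $B(x,r)$ for all admissible $(p,\mu)$ and all $x\in C_p(\mu)$. This again uses compactness and joint continuity. Then $F_{p,\mu}$ is strictly geodesically convex on $B(x,r)$, and its gradient vanishes at $x$ because $x$ is a global minimum. Hence $F_{p,\mu}(y)\ge e(p)+(h/4)\,\mathrm{dist}(x,y)^2>e(p)$ for $0<\mathrm{dist}(x,y)<r$. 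So distinct contact points are $r$-separated. Take $N$ to be the maximal cardinality of an $r$-separated subset of $\Sph$; then $\#\supp\mu\le\#C_p(\mu)\le N$.

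\textbf{Main obstacle.} The delicate step is the uniform modulus of continuity for the Hessian in $(p,\mu,x)$. The Hessian of $\abs{t}^p$ involves $\abs{t}^{p-2}$, which is continuous but not smooth at $t=0$ when $2<p<4$. The fix is to note that $t\mapsto p(p-1)\abs{t}^{p-2}$ is uniformly continuous on $[-1,1]$, uniformly for $p$ in a compact subinterval of $(2,\infty)$; this requires $\delta<p_0-2$. Integrating that bound against probability measures gives equicontinuity of $\Hess F_{p,\mu}$ over the whole family. That equicontinuity is exactly what the radius $r$ and the contradiction argument need.
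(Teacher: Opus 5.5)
Your proposal is correct and follows essentially the paper's proof. The uniform Hessian bound comes from the same compactness-and-contradiction argument; Lemma~\ref{lem:c2-limit} supplies the $C^2$ convergence of potentials and the closedness of minimizer sets you invoke. Your equicontinuity-based separation and packing step is the qualitative form of Proposition~\ref{prop:packing}, which makes the modulus explicit through a uniform $\beta$-H\"older bound on $\abs{t}^{p-2}$ and yields the count $(1+\pi/r)^d$; separating all contact points rather than only support points is a harmless variation, since your first step already gives the Hessian bound on all of $C_p(\mu)$.
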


Section~\ref{sec:stability} makes the bound quantitative. Finiteness of a
minimizer does not imply the Hessian hypothesis. In particular, an
isolated contact may be degenerate, and a centered atom split has no
quadratic signed self-interaction. The theorem is therefore a conditional
stability result, not a proof that the set \eqref{eq:discreteness-set} is
open.

A different way to force finiteness is to flatten the kernel at the
diagonal by a small polynomial correction.

\begin{theorem}[Finite-support perturbations]\label{thm:correction}
For every $p>0$ and every even integer $q>p$, every probability measure
minimizing the continuous kernel
\begin{equation}\label{eq:corrected-kernel}
 L_{p,q}(x,y)=\abs{x\cdot y}^{p}-\frac pq\abs{x\cdot y}^{q}
\end{equation}
has finite support. Moreover,
$\norm{L_{p,q}-K_p}_\infty=p/q$, where $K_p(x,y)=\abs{x\cdot y}^{p}$.
\end{theorem}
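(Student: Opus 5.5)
The identity $\norm{L_{p,q}-K_p}_\infty=p/q$ is immediate, since $L_{p,q}-K_p=-\frac pq\abs{x\cdot y}^q$ has absolute value at most $p/q$, with equality on the diagonal. The substance is finiteness of the support. The plan rests on the flattening of the profile
\[
 f(t)=\abs{t}^p-\tfrac pq\abs{t}^q ,
\]
together with the fact, used throughout the paper, that a kernel is positive semidefinite on the support of any of its minimizers. (This holds because the second variation $\iint L\,d\sigma\,d\sigma\ge0$ for every signed $\sigma$ of total mass zero supported in $\supp\mu$; I will only need it in this conditional form.)

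First I would compute the local shape of $L_{p,q}$ near the diagonal. One has $f'(t)=pt^{p-1}(1-t^{q-p})$ for $t>0$, so $f'(1)=0$, while $f''(1)=p(p-1)-p(q-1)=-p(q-p)<0$. Since $f$ is real-analytic near $t=\pm1$, writing $t=\cos\theta$ gives
\[
 f(\cos\theta)=f(1)-\tfrac{p(q-p)}{8}\theta^4+O(\theta^6).
\]
In geodesic normal coordinates $u$ around a point $x_0\in\Sph$, this says that for $x,y$ near $x_0$,
\[
 L_{p,q}(x,y)=f(1)-c\abs{u(x)-u(y)}^4+O(\abs{u(x)-u(y)}^6),\qquad c=\tfrac{p(q-p)}8>0 .
\]
The same holds near $-x_0$, since $f$ is even. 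The quadratic term has vanished, so there is no positive-definite $\theta^2$ contribution. The leading deviation is a negative quartic, and this is conditionally negative on moment-free signed measures.

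Suppose now that $\mu$ minimizes $L_{p,q}$ and $\supp\mu$ is infinite. Choose an accumulation point $x_0$ and support points $x_1,\dots,x_{d}$ in a small ball of radius $r$ around $x_0$. I would pick weights $w_0,\dots,w_d$ with $\sum w_i=0$ and $\sum w_iu(x_i)=0$; this is $d$ linear conditions on $d+1$ unknowns, so a nonzero solution exists. Set $\sigma=\sum w_i\delta_{x_i}$. Expanding $\abs{u-v}^4=(\abs u^2+\abs v^2-2u\cdot v)^2$ and using the vanishing zeroth and first moments gives
\[
 \iint \abs{u-v}^4\,d\sigma\,d\sigma=2\Big(\int\abs u^2d\sigma\Big)^2+4\iint(u\cdot v)^2\,d\sigma\,d\sigma .
\]
The second term equals $4\norm{\int u\otimes u\,d\sigma}_{HS}^2\ge0$. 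Hence $\iint L_{p,q}\,d\sigma\,d\sigma\le -cQ+O(r^6\norm w^2)$, where $Q$ is the right-hand side above. If $Q\gtrsim r^4\norm w^2$, this is negative for small $r$, contradicting positive semidefiniteness on the support.

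The main obstacle is this nondegeneracy estimate. The second-moment matrix $\int u\otimes u\,d\sigma$ may be small relative to $r^2\norm w$, for instance when the chosen points are nearly coplanar in a degenerate way or when the weights blow up. My first attempt would exploit the freedom of choice. Take a subsequence of support points converging to $x_0$ whose normalized directions $u(x_n)/\abs{u(x_n)}$ converge, and use points at geometrically separated scales, $x_0$ together with $x_n,x_m$ with $\abs{u(x_m)}\ll\abs{u(x_n)}$. This makes the configuration nearly one-dimensional, so that one can use the $k=2$ divided-difference form of the Bochner-type argument. The $d+1$-point moment cancellation is then replaced by three points with $\sum w_i=0$ and $\sum w_iu(x_i)$ of order smaller than the quartic term, while keeping $w$ bounded after rescaling. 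In one variable this reduces to the sign of $\varphi^{(4)}(0)=-24c<0$ for $\varphi(\theta)=f(\cos\theta)$, which is incompatible with positive semidefiniteness. The transverse components of the points must be shown to contribute only lower-order errors, and I expect that bookkeeping to be where the real work lies. Support points accumulating at $-x_0$ are handled identically, by evenness.
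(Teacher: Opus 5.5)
Your argument stops at the one step that carries the proof: exhibiting a signed measure $\sigma$ of total mass zero on $\supp\mu$ with $\iint L_{p,q}\,d\sigma\,d\sigma<0$. The $(d+1)$-point construction has no lower bound on $Q$, as you note, and the three-point fallback is only announced. Worse, the bookkeeping you describe fails in exactly the regime you choose. Take $y_0=x_0$, $y_1=x_n$, $y_2=x_m$, with $s=\abs{x_n-x_0}$, $t=\abs{x_m-x_0}\ll s$, and weights proportional to $(s-t,t,-s)$. The surviving quartic term is of order $s^4t^2$. The termwise bound $\sum_{i\ne j}\abs{w_iw_j}\,\abs{y_i-y_j}^6$ on your remainder is of order $s^7t$, and $r^6\norm w^2$ is of order $s^8$. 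You would need $t\gg s^3$. No choice achieves this if the support near $x_0$ is, say, a sequence with $\abs{x_{k+1}-x_0}=\abs{x_k-x_0}^4$. In addition, your normal-coordinate expansion has remainder only $O(r^2\abs{u(x)-u(y)}^4)$, not $O(\abs{u(x)-u(y)}^6)$, because $d(x,y)\ne\abs{u(x)-u(y)}$ away from the center; this has the same defect. On a single great circle, the divided-difference (Hermite--Genocchi) representation makes the limit uniform in the node spacing. Off a great circle, you must show the higher-order terms cancel rather than bound them one at a time.

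You can close the gap along your own lines by using the exact radial structure instead of moment expansions. For unit vectors $x\cdot y=1-\frac12\abs{x-y}^2$, so $L_{p,q}(x,y)=f(1)+\psi(\abs{x-y}^2)$ with $\psi(\rho)=f(1-\rho/2)-f(1)$. Here $\psi$ is negative and decreasing on $(0,2)$, and near $0$ it satisfies $\psi'(\rho)\le-c'\rho$ and $\abs{\psi(\rho)}\le C'\rho^2$. For three support points with $\sum w_i=0$, put $a=y_1-y_0$, $b=y_2-y_0$, $s=\abs a$, $t=\abs b$, and $\ell=\abs{a-b}$. Then
\[
 \sum_{i,j}w_iw_jL_{p,q}(y_i,y_j)
 =2\bigl(-\psi(s^2)w_1^2-\psi(t^2)w_2^2+Dw_1w_2\bigr),\qquad
 D=\psi(\ell^2)-\psi(s^2)-\psi(t^2).
\]
Suppose $a\cdot b\ge\kappa st$ and $t\le\kappa s/2$. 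Then $s^2-\ell^2\ge\frac32\kappa st$ and $\ell\ge s/2$, so integrating $-\psi'$ gives $D\ge\frac38c'\kappa s^3t$. Meanwhile $4\psi(s^2)\psi(t^2)\le4C'^2s^4t^4$. For $t/s$ small the discriminant is positive, so some $w$ gives negative energy. Transverse components enter only through $a\cdot b$, so $x_0,x_n,x_m$ with converging directions work at arbitrarily separated scales.

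The paper avoids test measures altogether. Because $f\ge0$, the kernel is fully positive semidefinite on the support, with Gram features of norm $\sqrt{f(1)}$. Cauchy--Schwarz then gives $\abs{f(x_n\cdot y)-f(x\cdot y)}^2\le2f(1)\bigl(f(1)-f(x_n\cdot x)\bigr)=O(r_n^4)$. Dividing by $r_n$ forces $f'(x\cdot x_j)\,(u\cdot x_j)=0$, hence $u\cdot x_j=0$, for support points $x_j$ near $x$. This contradicts $(x_j-x)/r_j\to u$.
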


We identify the limits of these corrected minimizers by a maximal
projective atomic-purity principle. A rotationally invariant
counterexample shows why finite support for every corrected problem need
not pass to the limit. Finally, a target-centered positive penalty
recovers any prescribed minimizer from nearby kernels. This separates
three questions: existence of a finite replacement, finiteness of every
minimizer, and a uniform support bound along an approximation. The general
non-even-exponent discreteness conjecture remains unresolved by these
results.

Throughout, weak convergence of measures is denoted by $\rightharpoonup$.
Because the kernel is even in each variable, its energy depends only on
the pushforward to $\RP$. A measure on the sphere has finite support if
and only if this pushforward does. Cardinality bounds on the sphere count
both signs; when projective points are counted instead, this is stated
explicitly.

\section{Variational structure}
\label{sec:variations}

For a continuous symmetric kernel $K$ on a compact metric space $X$, we
use $I_K(\eta,\xi)=\iint K\,d\eta\,d\xi$ for finite signed measures and
write $I_K(\eta)=I_K(\eta,\eta)$. Its energy is continuous on $\Prob(X)$,
so it attains a minimum. The following formulation records precisely
which positivity follows from minimality.

\begin{lemma}[Contact and centered positivity]\label{lem:psd}
Let $\mu$ minimize $I_K$, let $E=I_K(\mu)$, and let $S=\supp\mu$. Then
$F_\mu(x)=\int K(x,y)\,d\mu(y)\ge E$ on $X$, with equality on $S$.
For any $x_1,\ldots,x_n\in S$ and $c_1,\ldots,c_n\in\R$,
\begin{equation}\label{eq:centered-psd}
 \sum_{i,j=1}^n c_i c_j K(x_i,x_j)
       \ge E\left(\sum_{i=1}^n c_i\right)^2.
\end{equation}
Thus $K-E$ is positive semidefinite on $S$. If $E\ge0$, so is $K$.
\end{lemma}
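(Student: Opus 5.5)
The plan is a first-variation argument followed by a second-order expansion along signed perturbations that preserve total mass.

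\emph{Step 1: the potential inequality.} First note that $I_K(\mu)=\int F_\mu\,d\mu=E$. For $x\in X$ and $t\in[0,1]$, set $\mu_t=(1-t)\mu+t\delta_x\in\Prob(X)$. By bilinearity and symmetry of $K$,
\[
 I_K(\mu_t)=(1-t)^2E+2t(1-t)F_\mu(x)+t^2K(x,x).
\]
Minimality gives $I_K(\mu_t)\ge E$. Hence the right derivative at $t=0$, which equals $2(F_\mu(x)-E)$, is nonnegative, and so $F_\mu\ge E$ on $X$.

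\emph{Step 2: equality on the support.} Since $\int (F_\mu-E)\,d\mu=0$ and $F_\mu-E\ge0$, we get $F_\mu=E$ $\mu$-a.e. The function $F_\mu$ is continuous because $K$ is uniformly continuous on the compact space $X\times X$. The set $\{F_\mu>E\}$ is therefore open and $\mu$-null, so it misses $\supp\mu$. Thus $F_\mu=E$ on $S$.

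\emph{Step 3: perturbing in the support.} This is the main step. Point masses at points of $S$ need not be absolutely continuous with respect to $\mu$, so they cannot be subtracted directly. I would smear them as follows. For each $i$ and each small $r>0$, let $B_i^r$ be the ball of radius $r$ about $x_i$. Since $x_i\in S$, we have $\mu(B_i^r)>0$. Define the probability measure $\mu_i^r=\mu|_{B_i^r}/\mu(B_i^r)$ and the signed measure
\[
 \sigma_r=\sum_i c_i\bigl(\mu_i^r-\mu\bigr),
\]
which has total mass zero.

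For $\abs{t}$ small, $\mu+t\sigma_r$ is still a probability measure. Indeed, $\mu_i^r\le \mu/\mu(B_i^r)$, so the negative part of $t\sigma_r$ is dominated by $\abs{t}\,C_r\,\mu$ for some constant $C_r$.

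Minimality then gives
\[
 0\le I_K(\mu+t\sigma_r)-E=2t\,I_K(\mu,\sigma_r)+t^2I_K(\sigma_r).
\]
The linear term vanishes, because $I_K(\mu,\sigma_r)=\int F_\mu\,d\sigma_r$, the measure $\sigma_r$ is supported in $S$ where $F_\mu=E$, and $\sigma_r(X)=0$. Taking $t\neq0$ therefore yields $I_K(\sigma_r)\ge0$.

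\emph{Step 4: passing to the limit.} As $r\to0$, $\mu_i^r\rightharpoonup\delta_{x_i}$, so $\sigma_r\rightharpoonup\sigma=\sum_i c_i(\delta_{x_i}-\mu)$. Continuity of $K$ makes $I_K$ jointly continuous under weak convergence of measures with uniformly bounded total variation, hence $I_K(\sigma)\ge0$.

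\emph{Step 5: expanding.} Write $c=\sum_i c_i$. Using $F_\mu(x_i)=E$ and $I_K(\mu)=E$,
\[
 I_K(\sigma)=\sum_{i,j}c_ic_jK(x_i,x_j)-2c\sum_i c_iF_\mu(x_i)+c^2E
 =\sum_{i,j}c_ic_jK(x_i,x_j)-c^2E,
\]
which is exactly \eqref{eq:centered-psd}.

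\emph{Consequences.} Inequality \eqref{eq:centered-psd} can be rewritten as $\sum_{i,j} c_ic_j\bigl(K(x_i,x_j)-E\bigr)\ge0$, so $K-E$ is positive semidefinite on $S$. If $E\ge0$, then
\[
 \sum_{i,j} c_ic_jK(x_i,x_j)\ge E\,c^2\ge0,
\]
so $K$ itself is positive semidefinite on $S$.

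The only delicate point is Step 3: producing admissible perturbations that concentrate at arbitrary support points, which the $r$-ball localization and the weak limit in Step 4 handle.
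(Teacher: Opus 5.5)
Your proof is correct and follows essentially the same route as the paper: the first variation toward $\delta_x$, then the mass-zero, bounded-density perturbation $\sum_i c_i\mu_i^r-(\sum_i c_i)\mu$ built from normalized restrictions of $\mu$ to small balls, whose linear term vanishes because $F_\mu=E$ on $S$, followed by a weak limit as $r\to0$. The only cosmetic difference is that you pass to the limit $\sigma=\sum_i c_i(\delta_{x_i}-\mu)$ before expanding the quadratic form, whereas the paper expands $I_K(\eta_\varepsilon)$ first and then lets $\varepsilon\to0$.
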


\begin{proof}
The right derivative of $I_K((1-t)\mu+t\delta_x)$ at $t=0$ gives
$F_\mu(x)\ge E$. Since $\int F_\mu\,d\mu=E$, continuity gives equality
on the support.

To justify signed variations even when $\mu$ is nonatomic, let
$\mu_{i,\varepsilon}$ be its normalized restriction to
$B(x_i,\varepsilon)$ and set
\[
 s=\sum_i c_i,\qquad
 \eta_\varepsilon=\sum_i c_i\mu_{i,\varepsilon}-s\mu.
\]
This measure has total mass zero and density
\[
 g_\varepsilon=\frac{d\eta_\varepsilon}{d\mu}
 =\sum_i\frac{c_i\mathbf1_{B(x_i,\varepsilon)}}
                  {\mu(B(x_i,\varepsilon))}-s.
\]
All denominators are positive because $x_i\in S$, and $g_\varepsilon$
is bounded for fixed $\varepsilon$. Thus
$\mu+t\eta_\varepsilon=(1+tg_\varepsilon)\mu$ is a probability measure
for both signs of $t$ with $|t|\norm{g_\varepsilon}_\infty<1$.
Also $I_K(\mu,\eta_\varepsilon)=0$, since $F_\mu=E$ on $S$.
Minimality therefore gives $I_K(\eta_\varepsilon)\ge0$. Expanding yields
\[
 \sum_{i,j}c_ic_j I_K(\mu_{i,\varepsilon},\mu_{j,\varepsilon})
       \ge Es^2.
\]
Now $\mu_{i,\varepsilon}\rightharpoonup\delta_{x_i}$; continuity of the
kernel proves \eqref{eq:centered-psd}.
\end{proof}

For $K_p$, the minimum is strictly positive. Indeed, if
$M_\mu=\int xx^{\mathsf T}\,d\mu(x)$, then
\[
 I_2(\mu)=\tr(M_\mu^2)\ge\frac1d,
 \qquad \tr M_\mu=1.
\]
Comparing $\abs{t}^p$ with $t^2$ when $p\le2$, and applying Jensen's
inequality to $\abs{x\cdot y}^2$ when $p\ge2$, yields
\begin{equation}\label{eq:positive-minimum}
 e(p)\ge
 \begin{cases}d^{-1},&0<p\le2,\\d^{-p/2},&p\ge2.
 \end{cases}
\end{equation}
In particular, both $K_p-e(p)$ and $K_p$ are positive semidefinite on any
minimizing support. Such a support spans $\R^d$: otherwise a unit vector
orthogonal to its span would have zero potential, contradicting
\eqref{eq:positive-minimum} and the contact inequality.

\begin{proposition}[The range $0<p<2$; cf.~\cite{EhlerOkoudjou}]
\label{prop:small-p}
If $0<p<2$, every minimizing measure is supported on the $d$ lines of an
orthonormal basis, each line having total mass $1/d$. Conversely every
such measure minimizes. Its mass may be split arbitrarily between the two
unit vectors on each line.
\end{proposition}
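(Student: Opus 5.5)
The plan is to compare $\abs{t}^p$ with $t^2$ pointwise and reduce everything to the frame-potential bound $I_2(\mu)=\tr(M_\mu^2)\ge 1/d$.

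\emph{Lower bound and value.} For $t\in[-1,1]$ and $0<p<2$ we have $\abs{t}^p\ge t^2$, with equality exactly when $t\in\{-1,0,1\}$. Hence
\[
 I_p(\mu)\ge I_2(\mu)=\tr(M_\mu^2)\ge\frac1d
\]
for every $\mu$, where the last step is Cauchy--Schwarz on the eigenvalues of $M_\mu$, using $\tr M_\mu=1$.

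Now let $e_1,\dots,e_d$ be an orthonormal basis and let $\nu$ put mass $1/d$ on each line $\{\pm e_i\}$, split arbitrarily between the two signs. Then $x\cdot y\in\{0,\pm1\}$ for all $x,y\in\supp\nu$, so $I_p(\nu)=I_2(\nu)$. Moreover $M_\nu=\mathrm{Id}/d$, so $I_2(\nu)=1/d$. This gives $e(p)=1/d$ and proves the converse claim, including the freedom in splitting mass within each line.

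\emph{Structure of minimizers.} Take $\mu\in\Min_p$, so $I_p(\mu)=1/d$. Both inequalities above must then be equalities.

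From $I_p(\mu)=I_2(\mu)$ we get that $\abs{x\cdot y}^p-(x\cdot y)^2$, a continuous nonnegative function, vanishes $\mu\otimes\mu$-almost everywhere. By continuity it vanishes on all of $\supp\mu\times\supp\mu$. Therefore $x\cdot y\in\{-1,0,1\}$ for all $x,y$ in the support $S$.

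Thus any two points of $S$ are either antipodal, equal, or orthogonal. Choose one representative from each line meeting $S$. These representatives form an orthonormal set, so there are at most $d$ lines, and $S$ lies on the lines of some orthonormal set $e_1,\dots,e_k$ with $k\le d$.

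Let $m_i$ be the mass of $\mu$ on the line through $e_i$. Then $M_\mu=\sum_i m_i e_ie_i^{\mathsf T}$, so $\tr(M_\mu^2)=\sum_i m_i^2$ with $\sum_i m_i=1$. Equality in $\tr(M_\mu^2)\ge1/d$ forces $k=d$ and $m_i=1/d$ for every $i$.

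\emph{Main obstacle.} The only delicate point is passing from almost-everywhere equality to a statement about every pair of support points. This is handled by continuity of the integrand together with the fact that every point of $\supp\mu\times\supp\mu$ has neighborhoods of positive $\mu\otimes\mu$-measure.

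Alternatively, the same conclusion can be reached through Lemma~\ref{lem:psd}, but the direct argument above suffices.
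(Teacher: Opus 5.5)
Your proof is correct and follows the paper's argument: basis measures attain $1/d$, so a minimizer gives equality in $I_p(\mu)\ge I_2(\mu)=\tr(M_\mu^2)\ge 1/d$. Continuity of the nonnegative kernel difference then forces $\abs{x\cdot y}\in\{0,1\}$ on the support, and equality in the trace bound forces exactly $d$ orthogonal lines of mass $1/d$; your computation $\tr(M_\mu^2)=\sum_i m_i^2$ simply spells out the paper's step that equality means $M_\mu=d^{-1}\mathrm{Id}$.
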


\begin{proof}
A basis measure attains $1/d$, so equality must hold in
$I_p(\mu)\ge I_2(\mu)\ge1/d$. Equality in the first inequality implies
$\abs{x\cdot y}\in\{0,1\}$ for every support pair: the continuous
nonnegative difference of the two kernels has zero integral. Distinct
support lines are therefore orthogonal. Equality in the second inequality
is $M_\mu=d^{-1}\mathrm{Id}$, which forces exactly $d$ such lines with
equal masses. The converse is immediate.
\end{proof}

For comparison, if $p=2k$ and $\sigma$ is rotation-invariant probability,
then $\sigma$ minimizes. To see this, write
\[
 I_{2k}(\mu)=\left\|\int x^{\otimes 2k}\,d\mu(x)\right\|^2.
\]
Averaging the tensor over the orthogonal group is an orthogonal projection
and sends every such moment to the moment of $\sigma$. This proves
$I_{2k}(\mu)\ge I_{2k}(\sigma)$ and explains why even exponents must be
excluded from an every-minimizer discreteness conjecture.

The contact inequality also gives information away from the existing
support.

\begin{proposition}[A global variational bound]\label{prop:global}
Suppose in addition that $K(x,x)=1$. Let $\mu$ minimize $I_K$, and put
$E=I_K(\mu)$ and $g=F_\mu-E$. If $f\in L^\infty(\mu)$ is real,
$\int f\,d\mu=0$, and $\eta=f\mu$, then
\begin{equation}\label{eq:global-obstacle}
 \abs{F_\eta(z)}
 \le \norm f_\infty g(z)+\sqrt{(1-E)I_K(\eta)}
 \qquad(z\in X).
\end{equation}
Here $I_K(\eta)\ge0$ and $E\le1$.
\end{proposition}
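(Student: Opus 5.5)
Fix $z\in X$ and write $F_\eta(z)=\int K(z,y)f(y)\,d\mu(y)$. The plan is to test minimality of $\mu$ against a signed perturbation that combines the given direction $\eta$ with a small point mass at $z$, and then optimize over the free parameters. Since $\eta$ has bounded density and zero mass, and $\delta_z-\mu$ has zero mass, the measure
\[
 \mu_{s,t}=\mu+s\eta+t(\delta_z-\mu)
\]
is a probability measure whenever $t\in[0,1]$ and $|s|\norm f_\infty\le 1-t$. Indeed, $(1-t)\mu+s f\mu=(1-t+sf)\mu\ge0$ under that condition.

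Expanding the energy, and using $I_K(\mu,\eta)=\int F_\mu f\,d\mu=E\int f\,d\mu=0$ (because $F_\mu=E$ on $\supp\mu$ by Lemma~\ref{lem:psd}), gives
\[
 I_K(\mu_{s,t})-E = s^2 I_K(\eta)+2st\,F_\eta(z)+2t\,g(z)+t^2 Q,
\]
where $Q=I_K(\delta_z-\mu)=1-2F_\mu(z)+E=1-E-2g(z)$, using $K(z,z)=1$. Minimality makes this expression nonnegative on the admissible region. Before optimizing, I record the side facts. $I_K(\eta)\ge0$ follows from the $t=0$ case, or directly from Lemma~\ref{lem:psd}. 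For $E\le1$, integrate $K(x,x)=1$ against $\mu$: the contact inequality gives $1=\int K(x,x)\,d\mu\ge$ $\int F_\mu\,d\mu=E$. The cleaner route is $Q\le 1-E$ together with $Q+2g=1-E\ge0$, which comes from the positivity of $K-E$ on the support paired with the nonnegativity of $g$. I would verify that $E\le1$ indeed follows from Lemma~\ref{lem:psd} applied with $n=1$: $K(x,x)\ge E$.

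For the optimization I choose $s=-\lambda\,\sgn(F_\eta(z))\,t$ with $\lambda\ge0$ and let $t\downarrow0$ only when feasible. This yields, for every admissible pair,
\[
 2\lambda t|F_\eta(z)|\le \lambda^2t^2 I_K(\eta)+2t\,g(z)+t^2Q.
\]
Dividing by $t$ gives $2\lambda|F_\eta|\le 2g+t(\lambda^2 I_K(\eta)+Q)$, subject to $\lambda t\norm f_\infty\le 1-t$.

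The main obstacle is choosing $(\lambda,t)$ to obtain exactly the stated form $\norm f_\infty g+\sqrt{(1-E)I_K(\eta)}$. The natural choice is to take $t$ as large as the constraint allows, $t=1/(1+\lambda\norm f_\infty)$. Using $Q\le1-E$, and dividing by $2\lambda$, this gives
\[
 |F_\eta(z)|\le \frac{g(z)}{\lambda}+\frac{\lambda^2 I_K(\eta)+1-E}{2\lambda(1+\lambda\norm f_\infty)}.
\]
Alternatively, I would parametrize by $s$ directly, setting $t=1-|s|\norm f_\infty$ and $\eta$-coefficient $s$. I expect that a choice such as $\lambda=\norm f_\infty^{-1}$ scaled appropriately, or $\lambda^2=(1-E)/I_K(\eta)$ when this is compatible, produces the AM--GM term $\sqrt{(1-E)I_K(\eta)}$ plus the term $\norm f_\infty g(z)$. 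If a single choice does not land exactly, I would split into the two cases according to whether the unconstrained optimizer $\lambda^2=(1-E)/I_K(\eta)$ satisfies the feasibility constraint, bounding the other case crudely by $|F_\eta(z)|\le\norm f_\infty(F_\mu(z))$. That crude bound holds since $K\ge$ ... , which would need $K\ge0$ and so only applies in the $K_p$ setting. Adjusting the constants in this final balancing step is where I anticipate the real work.
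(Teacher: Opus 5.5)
Your competitor family is the right one, and your expansion $I_K(\mu_{s,t})-E=s^2I_K(\eta)+2st\,F_\eta(z)+2t\,g(z)+t^2\bigl(1-E-2g(z)\bigr)$ is correct. On the boundary $\abs{s}\norm f_\infty=1-t$, with $s$ of sign $-\sgn F_\eta(z)$, your measures $\mu_{s,t}$ are exactly the paper's $(1-t)\rho+t\delta_z$, where $\rho=(1-\sgn(b)f/\norm f_\infty)\mu$ and $b=F_\eta(z)$.

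The gap is that the decisive step is never carried out, and the inequality you write on the way to it is wrong. With $s=-\lambda\sgn(b)\,t$ the cross term is $2st\,F_\eta(z)=-2\lambda t^2\abs b$. It is quadratic in $t$, not $-2\lambda t\abs b$. Consequently your bound $\abs b\le g/\lambda+(\lambda^2I_K(\eta)+1-E)/(2\lambda(1+\lambda\norm f_\infty))$ is false. Take a two-point space with $K(1,1)=K(2,2)=1$, $K(1,2)=k\in(-1,1)$, $\mu$ uniform and $f=(1,-1)$. Then $E=(1+k)/2$, $g\equiv0$, $I_K(\eta)=(1-k)/2$ and $F_\eta(1)=(1-k)/2$, which is equality in \eqref{eq:global-obstacle}. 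Your bound at $\lambda=1$ gives only $(1-k)/4$. Letting $t\downarrow0$ in your divided inequality would even force $F_\eta=0$ on the whole contact set for every $\eta$.

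There are further problems. Replacing your $Q$ by its upper bound $1-E$ discards precisely the term $-2g(z)$ that the argument needs. Your fallback $\abs{F_\eta(z)}\le\norm f_\infty F_\mu(z)$ requires $K\ge0$, which is not assumed and turns out to be unnecessary. Your first justification of $E\le1$, integrating $K(x,x)$ against $\mu$, does not follow from the contact inequality. The $n=1$ case of Lemma~\ref{lem:psd}, or comparing $\mu$ with $\delta_z$, is what works.

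The repair is short. Divide $0\le\lambda^2t^2I_K(\eta)-2\lambda t^2\abs b+2t\,g+t^2(1-E-2g)$ by $t^2$. The right side is nonincreasing in $t$ because $g\ge0$, so take the largest admissible value $t=1/(1+\lambda\norm f_\infty)$. Then $2g/t=2g+2\lambda\norm f_\infty g$, the terms $\pm2g$ cancel, and
\[
 2\lambda\abs b\le\lambda^2I_K(\eta)+2\lambda\norm f_\infty\, g(z)+(1-E)\qquad(\lambda>0).
\]
Divide by $2\lambda$ and take the infimum over $\lambda$: use $\lambda^2=(1-E)/I_K(\eta)$ in general, and let $\lambda\to\infty$ or $\lambda\to0$ when $I_K(\eta)=0$ or $E=1$. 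This gives \eqref{eq:global-obstacle} with no case split and no sign assumption on $K$.

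This is exactly the paper's proof. Its quadratic $I_K(\eta)/\norm f_\infty^2+2r\bigl(g-\abs b/\norm f_\infty\bigr)+r^2(1-E)\ge0$ in $r=t/(1-t)$ becomes your corrected inequality under $r=1/(\lambda\norm f_\infty)$. Its discriminant condition $c\ge-\sqrt{ad}$ is your optimization over $\lambda$.
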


\begin{proof}
The asserted signs follow from bounded-density variations and comparison
with a point mass. If $M=\norm f_\infty=0$, there is nothing to prove.
Otherwise set $Q=I_K(\eta)$, $b=F_\eta(z)$, and
$\rho=(1-\sgn(b)f/M)\mu$, using either sign when $b=0$.
Then $\rho$ is a probability measure,
\[
 I_K(\rho)=E+Q/M^2,\qquad
 I_K(\rho,\delta_z)=E+g(z)-\abs b/M.
\]
Comparison of $\mu$ with $(1-t)\rho+t\delta_z$, followed by division by
$(1-t)^2$ and substitution $r=t/(1-t)$, gives
\[
 0\le Q/M^2+2r\bigl(g(z)-\abs b/M\bigr)+r^2(1-E)
 \qquad(r\ge0).
\]
For $a,d\ge0$, a quadratic $a+2cr+dr^2$ is nonnegative on $[0,\infty)$
only if $c\ge-\sqrt{ad}$. This proves \eqref{eq:global-obstacle},
including the cases $Q=0$ or $E=1$.
\end{proof}

For example, if $I_K(\eta)=0$, then $F_\eta$ vanishes on the whole contact
set, not only on the support. This is a restriction on an existing exact
null direction; it does not prove that such a direction exists on an
infinite support.

\section{Rational exponents: rank and the denominator obstruction}
\label{sec:rational}

\subsection{Finite minimizing replacements}

\begin{lemma}[Rank under Hadamard powers]\label{lem:hadamard}
If $H$ is a positive semidefinite kernel and $m\in\N$, then
$\rank H\le\rank(H^{\circ m})$, where
$H^{\circ m}(x,y)=H(x,y)^m$.
\end{lemma}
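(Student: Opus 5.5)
The plan is to reduce to finite Gram matrices. By the paper's definition, the rank of a kernel is the supremum of the ranks of its finite Gram matrices. It therefore suffices to show the following matrix statement: for every finite set $x_1,\dots,x_n$, the matrix $A=(H(x_i,x_j))$ satisfies $\rank A\le\rank(A^{\circ m})$. The right side is at most $\rank(H^{\circ m})$, and taking the supremum over finite sets gives the claim, including the case where $\rank(H^{\circ m})=\infty$. Since $A$ is positive semidefinite, so is $A^{\circ m}$ by the Schur product theorem. That fact is not strictly needed for the inequality, but it lets us phrase ranks through Gram vectors.

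\textbf{Factorization.} Write $A=G^{\mathsf T}G$ with columns $v_i\in\R^r$, where $r=\rank A$. Then $A^{\circ m}$ is the Gram matrix of the tensors $v_i^{\otimes m}$, so $\rank A^{\circ m}=\dim\spanop\{v_i^{\otimes m}\}$. We must show that this span has dimension at least $r$. Choose indices $i_1,\dots,i_r$ such that $v_{i_1},\dots,v_{i_r}$ form a basis of $\R^r$. It suffices to show that $v_{i_1}^{\otimes m},\dots,v_{i_r}^{\otimes m}$ are linearly independent.

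\textbf{Independence of the tensor powers.} Suppose $\sum_k c_k v_{i_k}^{\otimes m}=0$. Let $w_1,\dots,w_r$ be the dual basis, so $\langle w_l,v_{i_k}\rangle=\delta_{lk}$. Contract all $m$ tensor slots against $w_l^{\otimes m}$. This gives
\[
 \sum_k c_k\langle w_l,v_{i_k}\rangle^m=c_l=0 .
\]
So all $c_l$ vanish, which yields $\rank A^{\circ m}\ge r$. Equivalently, the principal $r\times r$ submatrix of $A^{\circ m}$ on these indices is the Gram matrix of the independent vectors $v_{i_k}^{\otimes m}$, and is therefore nonsingular.

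\textbf{Expected difficulty.} I expect no serious obstacle. The only subtlety is the infinite-rank bookkeeping. If $\rank H=\infty$, then for every $r$ there is a finite Gram matrix of rank at least $r$. The argument above then produces a finite Gram matrix of $H^{\circ m}$ with rank at least $r$, so $\rank H^{\circ m}=\infty$ as well.
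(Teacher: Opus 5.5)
Your proof is correct and is essentially the paper's argument. The paper also represents $H$ by feature vectors $h_x$, observes that $H^{\circ m}$ has feature vectors $h_x^{\otimes m}$, and proves that the tensor powers are independent by applying $\ell_i^{\otimes m}$ with dual functionals $\ell_i$, just as you contract against $w_l^{\otimes m}$; the only cosmetic difference is that you factor each finite Gram matrix separately instead of fixing one global Hilbert-space feature map, and the supremum and infinite-rank cases then go through identically.
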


\begin{proof}
Represent $H(x,y)=\langle h_x,h_y\rangle$ in a real Hilbert space. The
kernel $H^{\circ m}$ has feature vectors $h_x^{\otimes m}$. If
$h_{x_1},\ldots,h_{x_r}$ are linearly independent, choose linear
functionals $\ell_i$ on their span with $\ell_i(h_{x_j})=\delta_{ij}$.
Applying $\ell_i^{\otimes m}$ shows that their tensor powers are also
independent. Taking the supremum over finite independent families proves
the assertion, including infinite rank.
\end{proof}

For $p=a/b$ in lowest terms,
\begin{equation}\label{eq:power-polynomial}
 K_p^{\circ 2b}(x,y)=(x\cdot y)^{2a}.
\end{equation}
The polynomial kernel on the right has feature map $x^{\otimes2a}$ in
$\Sym^{2a}\R^d$, of dimension $D_{2a}$. When $a$ is even one may instead
use $K_p^{\circ b}=(x\cdot y)^a$. Thus every positive semidefinite
restriction of $K_p$ has rank at most $R_{d,a}$ in
\eqref{eq:rational-bound}.

\begin{proof}[Proof of Theorem~\ref{thm:rational}\textup{(1)}]
Let $S=\supp\mu$. Lemma~\ref{lem:psd} and
\eqref{eq:positive-minimum} give a continuous finite-rank Gram
representation
\[
 K_p(x,y)=\langle\Phi(x),\Phi(y)\rangle,\qquad x,y\in S,
 \qquad \Phi:S\longrightarrow\R^R,
\]
where $R\le R_{d,a}$. For completeness, choose $R$ anchors with invertible
Gram matrix $G$ and take $\Phi(x)=G^{-1/2}(K_p(x,x_i))_{i=1}^R$;
the rank condition gives the displayed identity.

Set $M=\int\Phi\,d\mu$. Then $\norm M^2=e(p)>0$ and
$\langle\Phi(x),M\rangle=e(p)$ for $x\in S$. Hence $\Phi(S)$ lies in
a proper affine hyperplane of $\R^R$. Its compact convex hull contains
$M$, so affine Carath\'eodory gives
$M=\sum_{i=1}^n w_i\Phi(x_i)$ with $n\le R$, $x_i\in S$, $w_i\ge0$,
and $\sum_iw_i=1$. The measure $\nu=\sum_iw_i\delta_{x_i}$ has
$I_p(\nu)=\norm M^2=e(p)$ and, for every $x\in S$,
$F_{p,\nu}(x)=\langle\Phi(x),M\rangle=F_{p,\mu}(x)$.
\end{proof}

\subsection{A multiplicity lemma on a compact curve}

We use two standard facts about algebraic curves. A nonconstant Nash arc
has coordinate functions algebraic over the parameter field, so its
complex Zariski closure is an irreducible curve. Normalizing the projective
closure gives a compact connected Riemann surface on which the coordinate
functions are meromorphic. Nash curve selection is available for a
nonisolated point of a real algebraic set
\cite[Proposition~8.1.13]{BochnakCosteRoy}; normalization and meromorphic
divisor theory are standard \cite{Forster}.

\begin{lemma}[Finite-dimensional vanishing orders]\label{lem:wronskian}
Let $V$ be an $r$-dimensional complex vector space of meromorphic
functions on a compact connected Riemann surface $X$. There is a finite
set $T\subset X$ such that every nonzero $f\in V$ has vanishing order at
most $r-1$ at every point of $X\setminus T$. The poles of all functions
in $V$ are bounded by a common effective divisor.
\end{lemma}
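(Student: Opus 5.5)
We will prove Lemma~\ref{lem:wronskian} with the classical Wronskian argument, in local coordinates. The second assertion is the easy half, so we treat it first. Choose a basis $f_1,\ldots,f_r$ of $V$. Each $f_i$ is meromorphic on the compact surface $X$, so it has finitely many poles. Let $D_\infty=\sum_{z}\max_i \ord_z^-(f_i)\,z$, where $\ord_z^-(f)=\max(0,-\ord_z f)$. This is an effective divisor with finite support. Every $f=\sum c_if_i$ satisfies $\ord_z f\ge\min_i\ord_z f_i\ge -D_\infty(z)$ at each $z$. Hence the poles of every element of $V$ are bounded by $D_\infty$.

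For the first assertion, fix a nonconstant meromorphic function $t$ on $X$; one exists by Riemann's existence theorem, or we take one of the coordinate functions in the application. Away from the finite set $T_0$ consisting of the poles and critical points of $t$, the function $t$ is a local coordinate. Define the Wronskian
\[
W=\det\bigl(d^{j}f_i/dt^{j}\bigr)_{i=1,\ldots,r;\ j=0,\ldots,r-1}.
\]
Derivatives with respect to $t$ are global meromorphic functions: $df/dt$ is the ratio of the meromorphic differentials $df$ and $dt$. So $W$ is meromorphic on $X$. It is not identically zero, because the $f_i$ are linearly independent over $\C$; this is the standard fact that the Wronskian of linearly independent functions on a connected domain does not vanish identically. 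Its proof is induction on $r$: divide by $f_1$, differentiate, and use the identity $W(f_1g_1,\ldots,f_1g_r)=f_1^rW(g_1,\ldots,g_r)$, with connectedness ensuring that identically vanishing holds on all of $X$ if it holds on an open set.

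Set $T=T_0\cup\{\text{poles of the }f_i\}\cup\{\text{zeros and poles of }W\}$. This set is finite, since $X$ is compact and a nonzero meromorphic function has finitely many zeros and poles. Now suppose $z\notin T$ and some nonzero $f\in V$ vanishes to order at least $r$ at $z$. In the coordinate $t-t(z)$ the functions $f_i$ are holomorphic near $z$. Replace the basis by one containing $f$. This changes $W$ only by a nonzero constant factor, so $W$ still does not vanish at $z$. In the new basis the row of $f$ is $(f,f',\ldots,f^{(r-1)})$, and every entry vanishes at $z$, so $W(z)=0$. This is a contradiction. Hence every nonzero $f\in V$ vanishes to order at most $r-1$ at each point outside $T$.

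The step needing the most care is the non-vanishing of $W$ together with the global meromorphy of $t$-derivatives. The point is to make sure the finite set $T$ does not depend on $f$, which is exactly why we use the single determinant $W$ rather than arguing function by function.
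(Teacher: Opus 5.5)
Your proof is correct and follows the paper's Wronskian argument, including the same common pole divisor. The differences are cosmetic. You differentiate with respect to a single global meromorphic function $t$, adding its poles and critical points to $T$, so that $W$ is an honest meromorphic function rather than a section of a power of the canonical bundle. You prove $W\not\equiv0$ by the inductive reduction rather than the paper's distinct-initial-orders Vandermonde computation.
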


\begin{proof}
Choose a basis $f_1,\ldots,f_r$ and form its Wronskian in a local
coordinate. It is not identically zero: at a point without poles, linear
elimination gives a basis of germs with distinct initial orders; the
leading Wronskian coefficient is a nonzero Vandermonde determinant in
those orders. Under coordinate changes the Wronskian is multiplied by a
nonvanishing factor, and thus defines a meromorphic section of a tensor
power of the canonical bundle, with finitely many zeros and poles. Include these points and the poles
of the $f_i$ in $T$. Off $T$, the map assigning the first $r$ Taylor
coefficients to a member of $V$ is invertible, proving the order bound.
A common pole divisor is
\[
 D=\sum_{z\in X}\max_i\{0,-\ord_z f_i\}\,[z].
\]
\end{proof}

\subsection{From arbitrary compact support to a curve}

\begin{proof}[Proof of Theorem~\ref{thm:rational}\textup{(2)}]
Suppose $S$ is infinite and compact and $K_{a/b}|_{S\times S}$ is positive
semidefinite. By Lemma~\ref{lem:hadamard}, there is a continuous
finite-dimensional feature map $h:S\to\R^R$ with
\[
 \langle h(x),h(y)\rangle=\abs{x\cdot y}^{a/b},
 \qquad \norm{h(x)}=1.
\]
Let $A=\{(x,h(x)):x\in S\}$, and let $Z$ be its real Zariski closure,
that is, the common real zero set of all polynomials vanishing on $A$.
By the Hilbert basis theorem this is a real algebraic set. Since
$\norm x^2=\norm u^2=1$ on $A$, these equations hold on $Z$, so $Z$ is
compact. The identity
\begin{equation}\label{eq:lift-identity}
 (u\cdot v)^{2b}=(x\cdot y)^{2a}
\end{equation}
holds for all $(x,u),(y,v)\in Z$: first fix one argument in $A$ and
extend polynomially in the other, then extend in the first argument.
If $(x,u),(x,v)\in Z$, the unit norm equations and
\eqref{eq:lift-identity} give $v=\pm u$. Thus projection onto the
$x$-coordinates has fibers of size at most two.

The infinite compact algebraic set $Z$ has a nonisolated point $z_0$.
Apply Nash curve selection to the semialgebraic set $Z\setminus\{z_0\}$
at $z_0$. It gives a nonconstant Nash arc
$\gamma:[0,\varepsilon)\to Z$, with $\gamma(0)=z_0$ and
$\gamma(t)\ne z_0$ for $t>0$. Write $\gamma(t)=(x(t),u(t))$.
Its projection $x(t)$ is nonconstant, since a continuous arc in a finite
fiber is constant.

Evaluation on the complexified germ of $\gamma$ maps the complex
polynomial ring into the integral domain $\C\{t\}$, so its kernel is
prime. Since the coordinate germs are Nash,
$\C(t,\gamma_1,\ldots,\gamma_{d+R})$ is algebraic over $\C(t)$.
Its subfield generated by the coordinate germs has transcendence degree
at most one, and nonconstancy makes that degree one. The complex Zariski
closure $C$ of the arc is therefore an irreducible affine curve. The arc
is Zariski dense in $C$, and successive extension in each variable makes
$\gamma\times\gamma$ Zariski dense in $C\times C$. Thus the norm
equations and \eqref{eq:lift-identity} hold on this curve and its product.
On the compact normalization $X$ of the projective closure of $C$, the
coordinates $x_i,u_j$ are meromorphic and the same identities hold
meromorphically on $X$ and $X\times X$.

Let $W$ be the real span of the projected real arc and let
$r=\dim W\le d$. In a real orthonormal basis of $W$, write its
meromorphic coordinates as $f_1,\ldots,f_r$. They are complex-linearly
independent: a complex linear relation restricts to two real relations
on a spanning real arc. Moreover,
\[
 \sum_{i=1}^r f_i^2=1.
\]
For a regular parameter $t\in X$, define meromorphic functions of $s$ by
\[
 A_t(s)=\sum_{i=1}^r f_i(s)f_i(t),\qquad
 B_t(s)=\sum_{j=1}^R u_j(s)u_j(t).
\]
They satisfy
\begin{equation}\label{eq:divisibility-identity}
 B_t^{2b}=A_t^{2a},\qquad A_t(t)=B_t(t)=1.
\end{equation}
Since $a,b$ are coprime, taking orders in
\eqref{eq:divisibility-identity} gives
\begin{equation}\label{eq:orders-divisible}
 b\mid\ord_z A_t\qquad(z\in X).
\end{equation}

Apply Lemma~\ref{lem:wronskian} to
$V=\spanop_\C\{f_1,\ldots,f_r\}$. If $b\ge r$, then
\eqref{eq:orders-divisible} forces every zero of every $A_t$ into its
fixed finite exceptional set $T$. Their poles are bounded by the fixed
divisor $D$ of that lemma. For every $t$,
\[
 \deg(A_t)_0=\deg(A_t)_\infty\le\deg D.
\]
Thus zero multiplicities at the finitely many points of $T$ are uniformly
bounded as well. Only finitely many principal divisors $(A_t)$ can occur.
Functions with the same divisor differ by a nonzero scalar, so the projective classes
$[A_t]\in\mathbb P(V)$ take only finitely many values.

On a connected open set where all coordinates are regular, the map
$t\mapsto[A_t]$ is holomorphic and hence must be constant. Independence
of the $f_i$ would then give $f_i(t)=c_i g(t)$ for fixed constants $c_i$
and one holomorphic function $g$. The identity $\sum_i f_i(t)^2=1$
forces $g^2\sum_i c_i^2=1$, so $g$ and all the $f_i$ are constant.
Analytic continuation contradicts nonconstancy of $x(t)$. Thus
$b<r\le d$. Applying Lemma~\ref{lem:psd} to a minimizing support proves
the last assertion of the theorem.
\end{proof}

\begin{corollary}\label{cor:density}
The set $\cE_d$ in \eqref{eq:discreteness-set} is dense in $(0,\infty)$.
\end{corollary}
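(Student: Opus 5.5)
The plan is to apply Theorem~\ref{thm:rational}(2) directly. It shows that if $p=a/b$ is in lowest terms with $b\ge d$, then every member of $\Min_p$ has finite support. So it suffices to show that, in every open interval $(\alpha,\beta)\subseteq(0,\infty)$, there is a reduced fraction $a/b$ with $b\ge d$ that is not an even integer. Such a $p$ lies in $\cE_d$ by definition \eqref{eq:discreteness-set}.

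To produce such a fraction, fix the interval and choose a prime $b$ with $b\ge d$, $b\ge 2$, and $b>1/(\beta-\alpha)$. Any interval of length exceeding $1/b$ contains at least two consecutive multiples of $1/b$, say $k/b$ and $(k+1)/b$. Since $b$ is prime, at most one of any $b$ consecutive integers is divisible by $b$. For $b\ge2$, at least one of $k$ and $k+1$ is therefore coprime to $b$; call it $a$. Then $p=a/b$ is reduced, lies in $(\alpha,\beta)$, and has denominator $b\ge d\ge2$. In particular $p$ is not an integer, so $p\notin2\N$.

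Theorem~\ref{thm:rational}(2) now applies to $p$. Its hypothesis is only that $p$ is a reduced fraction with $b\ge d$; the minimizing supports carry positive semidefinite restrictions of $K_p$ by Lemma~\ref{lem:psd} and \eqref{eq:positive-minimum}, which the theorem already accounts for. So every $\mu\in\Min_p$ is finitely supported, and $p\in\cE_d\cap(\alpha,\beta)$, which proves density.

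There is no real obstacle here, since all the work is done in Theorem~\ref{thm:rational}(2). The only points needing care are choosing $a$ coprime to $b$ inside a short interval, which the primality of $b$ handles, and checking that $p$ is not an even integer, which holds because $b\ge2$ and the fraction is reduced.
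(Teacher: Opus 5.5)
Your reduction is the same as the paper's. Theorem~\ref{thm:rational}(2) puts every reduced fraction $a/b$ with $b\ge d$ into $\cE_d$; such a fraction is not an integer because $b\ge d\ge2$. So only the density of these fractions is needed.

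The paper gets that density by counting: a bounded interval contains infinitely many rationals but only finitely many with reduced denominator below $d$. You instead construct such a fraction explicitly with a prime denominator. That construction works, but one step is false as written. An open interval of length exceeding $1/b$ is guaranteed to contain only one multiple of $1/b$, not two consecutive ones, and that single multiple may be an integer. For example, $b=5$ satisfies your requirements when $d\le5$ and $\beta-\alpha>1/5$. Yet the interval $(0.9,1.15)$ contains only $5/5=1$, whose reduced denominator is $1$.

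The fix is to require $b>2/(\beta-\alpha)$. Then $(b\alpha,b\beta)$ has length greater than $2$, so it contains two consecutive integers $k,k+1$ with $k\ge1$. At most one of them is divisible by the prime $b$, and the rest of your argument goes through unchanged.
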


\begin{proof}
It contains every positive rational with reduced denominator at least
$d$. Any bounded interval contains infinitely many rationals but only
finitely many with reduced denominator less than $d$.
\end{proof}

The denominator obstruction is sharp for positive semidefinite supports,
without a minimality assumption. Let $v(t)=(\cos t,\sin t)$ and
$x(t)=v(t)^{\otimes b}\in\Sym^b\R^2\cong\R^{b+1}$. On a sufficiently
short closed interval,
\[
 x(s)\cdot x(t)=\cos^b(s-t)>0,\qquad
 \abs{x(s)\cdot x(t)}^{a/b}=\cos^a(s-t).
\]
The last kernel is the Gram kernel of $v(t)^{\otimes a}$. This gives an
infinite compact positive semidefinite support in dimension $d=b+1$.
It does not supply a minimizing measure for the sphere problem.

\section{Analytic rigidity and great-circle restrictions}
\label{sec:rigidity}

\subsection{Finite rank at an irrational exponent}

We use the following consequence of
\cite[Theorem~2.1(iii)]{ChengDiScalaYuan}. If holomorphic germs
$\chi_1,\ldots,\chi_s$ vanish at $0$, at least one is nonconstant, and
\begin{equation}\label{eq:umehara}
 \left(1+\sum_{i=1}^s\abs{\chi_i(z)}^2\right)^\alpha
       =\sum_{j=1}^R\abs{Q_j(z)}^2
\end{equation}
for finitely many holomorphic germs $Q_j$, then $\alpha\in\Q$.
Indeed, the right side belongs to the finite-sum Umehara algebra, and
hence to its field of fractions. The cited theorem applies after deleting
identically zero $\chi_i$; the expression on the left is nonconstant.
We also use Noetherianity of the ring of real-analytic germs and the
real-analytic curve-selection lemma for a semianalytic zero set; see
\cite{BierstoneMilman}. This is distinct from the Nash curve selection
used in Section~\ref{sec:rational}.

\begin{proof}[Proof of Theorem~\ref{thm:irrational}]
Suppose $S$ is infinite. Choose an accumulation point $x_0$ and let $A$
be the intersection of $S$ with a sufficiently small closed spherical cap
centered at $x_0$, so that all pairwise inner products in $A$ are positive.
Let the rank of the restricted kernel on $A$ be $r$, choose $r$ anchors
$s_1,\ldots,s_r\in A$ with invertible Gram matrix $G$, and set
\[
 h(x)=G^{-1/2}\bigl((x\cdot s_i)^p\bigr)_{i=1}^r.
\]
This defines a real-analytic map in an ambient neighborhood $U$ of $x_0$
where the relevant inner products are positive. Shrink $U$ so that
$x\cdot y>0$ for $x,y\in U$ and $S\cap U\subset A$. Finite-rank Gram
linear algebra gives
\[
 R(x,y):=(x\cdot y)^p-h(x)\cdot h(y)=0
       \qquad(x,y\in S\cap U).
\]

Let $\mathcal I$ be the ideal of real-analytic germs at $x_0$ vanishing
on $S$ in some neighborhood of $x_0$. By Noetherianity of the convergent
power-series ring $\R\{x-x_0\}$, choose germ generators
$g_1,\ldots,g_m$ and representatives on a common smaller neighborhood
$V$. Then $S\cap V\subset Z:=\{g_1=\cdots=g_m=0\}$, and $x_0$ is
nonisolated in $Z$. Apply real-analytic curve selection to the
semianalytic set $Z\setminus\{x_0\}$ at $x_0$. This gives a nonconstant
analytic arc $\gamma(t)$ through $x_0$ in $Z$ for $t\ge0$.
Its convergent series extend to a two-sided interval, and the identity
theorem keeps every $g_i(\gamma(t))$ zero there. Since
$\norm x^2-1\in\mathcal I$, the arc lies on the sphere. It is not
claimed to lie in $S$.

For each fixed $y\in S\cap V$, the germ $R(\cdot,y)$ belongs to
$\mathcal I$. It vanishes on $\gamma(t)$ initially near $0$, and then
on a common connected interval by the one-variable identity theorem.
Consequently, for each fixed $t$ in that interval,
$R(\gamma(t),\cdot)$ vanishes on $S\cap V$ and belongs to
$\mathcal I$. Applying the same argument in the second variable gives
\begin{equation}\label{eq:arc-identity}
 \bigl(\gamma(t)\cdot\gamma(u)\bigr)^p
       =h(\gamma(t))\cdot h(\gamma(u))
\end{equation}
for all sufficiently small real $t,u$. This two-step continuation avoids
any assumption of a uniform radius for ideal membership as a parameter
varies.

Rotate so that $\gamma(0)=e_1$ and extend the real-analytic functions
holomorphically. The identity theorem, applied successively in the two
parameters of \eqref{eq:arc-identity}, gives the polarized holomorphic
identity near $(0,0)$, with the power branch fixed by its value at
$(0,0)$. Substitution of the antiholomorphic diagonal $u=\bar z$ yields
\[
 \sum_{j=1}^r\abs{h_j(\gamma(z))}^2
       =\left(\sum_{i=1}^d\abs{\gamma_i(z)}^2\right)^p.
\]
The germ $\gamma_1$ is nonzero near $0$ and admits a holomorphic $p$th
power there. Divide by $\abs{\gamma_1(z)}^{2p}$ and put
\[
 \chi_i=\gamma_i/\gamma_1\quad(2\le i\le d),\qquad
 Q_j=(h_j\circ\gamma)/\gamma_1^p.
\]
All the $\chi_i$ vanish at $0$, and at least one is nonconstant;
otherwise the real unit arc would be constant. Delete the identically
zero $\chi_i$; every remaining germ is nonconstant. We obtain
\eqref{eq:umehara} with $\alpha=p$. Its nonconstant left side equals a
finite sum of squared moduli and therefore belongs to the nonconstant
part of the Umehara fraction field. Theorem~2.1(iii) of
\cite{ChengDiScalaYuan} forces $p\in\Q$, a contradiction.
The equivalence for minimizing measures now follows from
Lemma~\ref{lem:psd}, since a kernel on a finite set has finite rank.
\end{proof}

\subsection{Uniform finiteness on great circles}

A different argument bounds intersections with a great circle without
assuming finite rank. The quantitative distinction between a short-arc
bound and a whole-circle bound is important.

\begin{theorem}\label{thm:circles}
For every $p>0$ with $p\notin2\N$, there is a finite number $B(p)$ such
that every minimizing support in every dimension meets each great circle
in at most $B(p)$ points. If $p=m$ is odd, one may take $B(p)=10m$.
If $p$ is noninteger, there is $\delta(p)>0$ such that one may take
\[
 B(p)=(\lfloor p\rfloor+2)\left\lceil\frac{2\pi}{\delta(p)}\right\rceil.
\]
In particular, every minimizer on $\mathbb S^1$ has finite support.
\end{theorem}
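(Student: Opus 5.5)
Let $S=\supp\mu$ for a minimizer $\mu\in\Min_p$ on $\Sph$, and let $\Gamma$ be a great circle, parametrized by $v(\theta)=\cos\theta\,e+\sin\theta\,f$ with $e\perp f$. The only input is Lemma~\ref{lem:psd} together with \eqref{eq:positive-minimum}: $K_p-e(p)$ is positive semidefinite on $S$, and in particular on $S\cap\Gamma$. For $x,y\in\Gamma$ the kernel restricts to $\phi(\theta-\theta')=\abs{\cos(\theta-\theta')}^p$. This reduces the statement to a one-dimensional question, independent of $d$. Suppose $\theta_0,\dots,\theta_n$ are points of $S\cap\Gamma$. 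We want a nonzero vector $c$ for which $\sum c_ic_j\phi(\theta_i-\theta_j)<e(p)\,(\sum c_i)^2$. The plan is to take $\sum c_i=0$ and make the quadratic form negative, i.e., to show that $\phi$ is not conditionally positive definite on too many points.

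\emph{Noninteger $p$.} I work on a short arc $J$ of length $\delta(p)$ on which all pairwise values $\cos(\theta_i-\theta_j)$ are positive. Then $\phi(s)=\cos^p s$ is analytic there. I claim that any $k=\lfloor p\rfloor+3$ points of $J$ already give a violation, so each such arc contains at most $\lfloor p\rfloor+2$ points. Covering $\Gamma$ by $\lceil 2\pi/\delta\rceil$ arcs then yields the stated $B(p)$.

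To prove the claim, use divided-difference coefficients, i.e., let $c$ annihilate polynomials of degree $<k-1$ in the points. Then, by scaling the configuration toward a single point, the centered quadratic form becomes asymptotically a multiple of a coefficient of the expansion of $(x\cdot y)^p$ in the local variable. Equivalently, pass to the local model $(1+\langle\xi,\eta\rangle)^p$ via the coordinates $\tan\theta$: one has $\cos(s-t)=\cos s\cos t\,(1+\tan s\tan t)$, and the factor $\cos^p s\cos^p t$ is a congruence, so it does not affect signs. Positive semidefiniteness of $(1+uv)^p$ on $k$ distinct points, together with the centered version, forces the Taylor coefficients $\binom{p}{j}$, $j\le k-1$, to be nonnegative. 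This is the Schoenberg-type argument: test with vectors $c$ orthogonal to $1,u,\dots,u^{j-1}$ and shrink the points. But $\binom{p}{\lfloor p\rfloor+2}<0$ for noninteger $p$. A limiting step is needed: make $\delta$ small so that the error terms in the scaling argument are uniform. I expect this uniformity, choosing $\delta(p)$ so that the sign defect is strict for every configuration and not just asymptotically, to be the main obstacle. I would handle it by a compactness argument over normalized configurations, using the Cauchy–Binet expansion of the Gram determinant of $(1+uv)^p=\sum_j\binom pj u^jv^j$. The minors are then sums of products of Vandermonde-type determinants with coefficients $\binom pj$, and the term with the negative coefficient dominates when the spread is small.

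\emph{Odd $p=m$.} Here $\abs{\cos s}^m$ is not analytic at $s=\pi/2$, and the short-arc analytic argument degenerates because all coefficients $\binom mj$ are nonnegative. Instead I would use the whole circle. Expand $\abs{\cos\theta}^m$ in a Fourier series. It is even and $\pi$-periodic, with coefficients $\hat\phi(2n)$ that are eventually of alternating sign and nonzero, decaying like $n^{-m-1}$. A conditionally positive semidefinite kernel on a set of $N$ points would contradict an explicit negative test combination. Concretely, take $c$ built from a trigonometric polynomial of degree about $5m$ that vanishes, to the appropriate orders, against the finitely many positive low modes. Counting the constraints gives the bound $10m$ on the number of points, with sign counted projectively and doubled. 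This step is more computational, and I would present it second, reusing the same Cauchy–Binet/Fourier positivity reasoning.

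Finally, the statement about $\mathbb S^1$ follows, since there the sphere itself is a great circle.
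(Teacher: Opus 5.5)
\paragraph{Noninteger exponents.} Your argument here is the paper's. It uses the same diagonal congruence to $(1+u_iu_j)^p$, the single negative coefficient $\binom p{\lfloor p\rfloor+2}$ among the first $k=\lfloor p\rfloor+3$, and a limit that is uniform as all nodes shrink. Your Cauchy--Binet expansion writes $\det G/V(u)^2$ as $\prod_{j<k}\binom pj$ plus Schur-function terms that vanish uniformly. This is equivalent to the paper's factorization $\det G=\prod_{i<j}(u_j-u_i)^2\det H$ with $H\to\mathrm{diag}\binom p{i-1}$. In that step invoke only plain positive semidefiniteness of $K_p$. The centered inequality is not preserved by the $\cos^p$ congruence.

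\paragraph{Odd exponents: the gap.} The odd case has a genuine gap, and it lies in the strategy "take $\sum c_i=0$ and make the form negative." Let $A$ be a pairwise acute part of the support, for instance its points in an arc of length less than $\pi/2$. On $A$ one has $\abs{x\cdot y}^m=\langle x^{\otimes m},y^{\otimes m}\rangle$, so
\[
\sum c_ic_jK_m(x_i,x_j)=\norm{\sum c_ix_i^{\otimes m}}^2\ge0
\]
for every $c$. No zero-sum test vector supported in a short arc can ever give a negative value. Yet the theorem must exclude, say, $2m+1$ support points clustered in one such arc.

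The Fourier route cannot reach this. Negative high coefficients of $\abs{\cos\theta}^m$ only show that some spread-out configurations fail conditional positive definiteness, not that every configuration with more than $10m$ points does. Moreover, those signs alternate, so there are infinitely many positive modes, and annihilating finitely many low modes does not make the form negative. The count $10m$ is asserted rather than derived.

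\paragraph{The missing idea.} You need test vectors with $\sum c_i\ne0$, that is, the term $e(m)(\sum c_i)^2$ with $e(m)>0$, combined with finite rank.
\begin{enumerate}
\item By \eqref{eq:centered-psd}, $\norm{\sum_i c_ix_i^{\otimes m}}^2\ge e(m)(\sum_i c_i)^2$ on $A$.
\item Hence $\sum_i c_ix_i^{\otimes m}\mapsto\sum_i c_i$ is a well-defined linear functional on the finite-dimensional span of $\{x^{\otimes m}:x\in A\}$.
\item Riesz representation then gives an odd form $P$ of degree $m$ with $P=1$ on $A$.
\item Restricted to a great circle, $P-1$ is a trigonometric polynomial of degree at most $m$. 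It is not identically zero because $P(-x)=-P(x)$, so it has at most $2m$ zeros.
\item Five arcs of length $2\pi/5<\pi/2$ cover the circle, which gives $10m$.
\end{enumerate}
Equivalently, any $2m+1$ points of $A$ on one great circle carry a null vector $c$ of their Gram matrix with $\sum c_i\ne0$. This violates \eqref{eq:centered-psd} precisely through the term your zero-sum plan discards.
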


\begin{proof}
First let $p$ be noninteger, set $a=\lfloor p\rfloor$, and put $N=a+3$.
Parametrize a short great-circle arc by
$x(u)=(z+uv)/(1+u^2)^{1/2}$, where $z,v$ are orthonormal. Its kernel
matrix is positively diagonally congruent to
\[
 G_{ij}=(1+u_i u_j)^p.
\]
For distinct nodes, divided-difference elimination in rows and columns
gives
\begin{equation}\label{eq:divided-det}
 \det G=\prod_{i<j}(u_j-u_i)^2\det H,\qquad
 H_{ij}=g[u_1,\ldots,u_i;u_1,\ldots,u_j],
\end{equation}
where $g(u,v)=(1+uv)^p$. The integral formula for divided differences
\cite[Chapter~I]{deBoor} gives, uniformly as all nodes tend to $0$,
\[
 H_{ij}\longrightarrow
 \frac{\partial_u^{i-1}\partial_v^{j-1}g(0,0)}{(i-1)!(j-1)!}
       =\delta_{ij}\binom p{i-1}.
\]
Uniformity holds regardless of how closely some nodes approach each
other. The coefficients $\binom p0,\ldots,\binom p{a+1}$ are positive
and $\binom p{a+2}<0$. Thus \eqref{eq:divided-det} is negative for $N$
distinct nodes in a sufficiently short arc, contradicting positive
semidefiniteness. Choose its angular length $\delta(p)$ uniformly by
rotation. Covering the full circle by
$\lceil2\pi/\delta(p)\rceil$ such arcs gives the stated bound.

Now let $p=m$ be odd, $S$ be a minimizing support, and $A\subset S$ a
pairwise acute subset. Put $\phi(x)=x^{\otimes m}$. From
\eqref{eq:centered-psd}, for every finite family in $A$,
\[
 \left\|\sum_i c_i\phi(x_i)\right\|^2
       \ge e(m)\left(\sum_i c_i\right)^2.
\]
Therefore the functional
$\ell(\sum_i c_i\phi(x_i))=\sum_i c_i$ is well defined and bounded on
the span of $\phi(A)$. Finite-dimensional Riesz representation yields
one homogeneous odd polynomial $P$ of degree $m$ with $P=1$ on $A$.
On a great circle, $P-1$ is a nonzero trigonometric polynomial of degree
at most $m$: it cannot vanish identically because $P$ is odd. Such a
polynomial has at most $2m$ distinct zeros. Five arcs of angular length
$2\pi/5<\pi/2$ cover the circle and are pairwise acute within each arc,
giving $10m$.
\end{proof}

This theorem controls one-dimensional sections, not the full support in
higher dimensions. A latitude circle in $\mathbb S^2$, for example,
meets each great circle in at most two points. Nor does the noninteger
proof apply to an arbitrary smooth curve: its exact diagonal congruence
to $(1+uv)^p$ uses great-circle geometry. The tensor curves at the end of
Section~\ref{sec:rational} already show that general analytic curves can
carry positive semidefinite rational-power kernels.

\section{Contact curvature and conditional stability}
\label{sec:stability}

For $p>2$ the potential is $C^2$ on the sphere. At a contact point $x$,
its Hessian in a unit tangent direction $v$ is
\begin{equation}\label{eq:hessian}
 \Hess F_{p,\mu}(x)[v,v]
 =p(p-1)\int\abs{x\cdot y}^{p-2}(v\cdot y)^2\,d\mu(y)-p e(p).
\end{equation}
This follows by differentiating along
$x(s)=x\cos s+v\sin s$. All derivatives extend continuously across
$x\cdot y=0$ when $p>2$. Since contacts are global minima of the
potential, \eqref{eq:hessian} is nonnegative. Strict positivity requires
an additional argument or hypothesis.

\subsection{The exact centered split}

Suppose $\mu=w\delta_x+\lambda$ is minimizing, $w>0$. Replacing its atom
by $w\rho$, where $\rho$ is a probability measure, gives the exact
identity
\begin{equation}\label{eq:split-exact}
 I_p\bigl(\mu+w(\rho-\delta_x)\bigr)-e(p)
 =2w\int(F_{p,\mu}-e(p))\,d\rho+w^2I_p(\rho-\delta_x).
\end{equation}
Take
$\rho_\varepsilon=(\delta_{x\cos\varepsilon+v\sin\varepsilon}
+\delta_{x\cos\varepsilon-v\sin\varepsilon})/2$ with $v\perp x$ unit.
For sufficiently small $\varepsilon$,
\begin{align}
 I_p(\rho_\varepsilon-\delta_x)
 &=\frac32+\frac12\cos^p(2\varepsilon)-2\cos^p\varepsilon\notag\\
 &=\frac{p(3p-2)}4\varepsilon^4+O(\varepsilon^6).
 \label{eq:split-quartic}
\end{align}
The quadratic self-energy decrease cancels the contribution already
present in the first-variation term. In particular, when $p>2$, the
whole variation has leading expression
\[
 w\,\Hess F_{p,\mu}(x)[v,v]\varepsilon^2+o(\varepsilon^2)
 +w^2\frac{p(3p-2)}4\varepsilon^4+O(\varepsilon^6).
\]
It cannot force strict positivity of the Hessian. Likewise, an isolated
zero of a nonnegative analytic function need not be quadratic, as
$u^4$ demonstrates.

\subsection{Stability at the whole contact set}

\begin{lemma}\label{lem:c2-limit}
If $p_n\to p_0>2$ and $\mu_n\rightharpoonup\nu$, then
$F_{p_n,\mu_n}\to F_{p_0,\nu}$ in $C^2(\Sph)$ and
$I_{p_n}(\mu_n)\to I_{p_0}(\nu)$. If $\mu_n\in\Min_{p_n}$, then
$\nu\in\Min_{p_0}$.
\end{lemma}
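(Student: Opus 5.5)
The plan is to reduce everything to uniform convergence of the kernels and their first two derivatives in the first variable, and then integrate against the measures.

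\textbf{Uniform kernel convergence.} Consider $k_p(x,y)=\abs{x\cdot y}^p$ as a function of $x$ in an ambient neighborhood of the sphere, or along geodesics $x(s)=x\cos s+v\sin s$. For $p$ in a compact interval $J\subset(2,\infty)$ containing $p_n$ and $p_0$, I will check three things:
\begin{itemize}
\item $k_p$, $\partial_x k_p=p\abs{x\cdot y}^{p-2}(x\cdot y)\,y$, and $\partial_x^2k_p=p(p-1)\abs{x\cdot y}^{p-2}\,yy^{\mathsf T}$ are jointly continuous in $(p,x,y)\in J\times\Sph\times\Sph$;
\item this still holds across $x\cdot y=0$, because the exponent $p-2$ stays bounded below by $\min J-2>0$;
\item the function $(t,p)\mapsto\abs t^{p-2}$ is jointly continuous on $[-1,1]\times J$.
\end{itemize}
Joint continuity on a compact set gives uniform continuity. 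Hence, as $p\to p_0$, these derivatives converge to their $p_0$ counterparts uniformly in $(x,y)$. The spherical gradient and Hessian are obtained from the ambient ones by projection, with the smooth correction term $-(x\cdot\nabla)$ visible in \eqref{eq:hessian}. So it suffices to control the ambient derivatives up to order two.

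\textbf{Convergence of the potentials.} For each derivative order $j\le2$, I split
\[
 D^jF_{p_n,\mu_n}(x)-D^jF_{p_0,\nu}(x)=\int\bigl(D^jk_{p_n}-D^jk_{p_0}\bigr)(x,y)\,d\mu_n(y)+\int D^jk_{p_0}(x,y)\,d(\mu_n-\nu)(y).
\]
The first term is bounded by the sup norm from the previous step, so it tends to $0$ uniformly in $x$. The second term tends to $0$ pointwise by weak convergence.

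This second term is the only real obstacle, because weak convergence alone gives only pointwise convergence. I will upgrade it to uniform convergence using equicontinuity in $x$. The family $\{D^jk_{p_0}(\cdot,y)\}_y$ is uniformly equicontinuous, by uniform continuity on the compact product, and the measures are probabilities. Therefore $x\mapsto\int D^jk_{p_0}(x,y)\,d(\mu_n-\nu)(y)$ is an equicontinuous, pointwise-null sequence on a compact space. By a standard Arzel\`a--Ascoli or finite-net argument, it converges uniformly. This yields $C^2$ convergence.

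\textbf{Energies and minimality.} The energies converge because $I_{p_n}(\mu_n)=\int F_{p_n,\mu_n}\,d\mu_n$. Uniform convergence of the potentials together with $\mu_n\rightharpoonup\nu$ gives the limit $\int F_{p_0,\nu}\,d\nu=I_{p_0}(\nu)$.

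For minimality, fix any $\eta\in\Prob(\Sph)$. Then $I_{p_n}(\mu_n)\le I_{p_n}(\eta)$, and $I_{p_n}(\eta)\to I_{p_0}(\eta)$ by uniform kernel convergence. Passing to the limit gives $I_{p_0}(\nu)\le I_{p_0}(\eta)$, so $\nu\in\Min_{p_0}$.

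As a byproduct, $e(p_n)\to e(p_0)$. This will be useful for the later stability argument.
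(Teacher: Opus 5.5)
Your proposal is correct and follows essentially the same route as the paper: joint continuity of the kernel and its first two $x$-derivatives on $J\times\Sph\times\Sph$ for a compact $J\subset(2,\infty)$, an equicontinuity (finite-net) upgrade of weak convergence to uniform convergence in $x$, energy convergence by integration, and minimality by comparison with each fixed competitor. Your write-up only makes explicit what the paper leaves implicit, namely the two-term splitting and the passage from ambient to spherical derivatives.
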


\begin{proof}
On a compact parameter interval in $(2,\infty)$, the kernel and its first
two spherical derivatives in the first variable are jointly continuous
in $(p,x,y)$. Uniform continuity and a finite-net argument turn weak
convergence against these continuous functions of $y$ into uniform
convergence in $x$, also for the derivatives. Energy convergence follows
by integration; comparison with each fixed competitor proves minimality
of the limit.
\end{proof}

\begin{proposition}[Local cardinality bound]\label{prop:local-stability}
Suppose $p_n\to p_0>2$, $\mu_n\in\Min_{p_n}$, and
$\mu_n\rightharpoonup\nu$. If every point of $C_{p_0}(\nu)$ has a
positive-definite contact Hessian, then this contact set is finite and
\[
 \#\supp\mu_n\le\#C_{p_0}(\nu)
\]
for all sufficiently large $n$.
\end{proposition}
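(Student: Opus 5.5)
First I will show that $C_{p_0}(\nu)$ is finite. It is closed in the compact sphere. At each $x\in C_{p_0}(\nu)$ the function $g=F_{p_0,\nu}-e(p_0)$ is nonnegative, vanishes at $x$, and has zero tangential gradient there, because $x$ is a global minimum of the potential. By Taylor's formula on the sphere, the positive-definite Hessian gives $g(y)\ge c_x\,\mathrm{dist}(x,y)^2>0$ on a punctured neighborhood of $x$. Thus each contact point is isolated in $C_{p_0}(\nu)$, and compactness makes the set finite, say $C_{p_0}(\nu)=\{x_1,\ldots,x_k\}$. By continuity of the Hessian in $x$, I can choose pairwise disjoint closed geodesic balls $B_i=\overline B(x_i,r)$ on which $\Hess F_{p_0,\nu}\succeq 2h\,\mathrm{Id}$ for some $h>0$. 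Since the $B_i$ are open neighborhoods of a compact zero set, $g\ge\eta>0$ on the complement of $\bigcup_i B(x_i,r)$.

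Next I use Lemma~\ref{lem:c2-limit}, which gives $F_{p_n,\mu_n}\to F_{p_0,\nu}$ in $C^2$ and $e(p_n)=I_{p_n}(\mu_n)\to e(p_0)$. Set $g_n=F_{p_n,\mu_n}-e(p_n)$. For large $n$ we have $g_n>\eta/2$ off $\bigcup_i B(x_i,r)$, so $C_{p_n}(\mu_n)$, and hence $\supp\mu_n$, lies in $\bigcup_i B(x_i,r)$. Also $\Hess F_{p_n,\mu_n}\succeq h\,\mathrm{Id}$ on each $B_i$ for large $n$. The mass of $\nu$ lies in $C_{p_0}(\nu)$, so weak convergence and $\mu_n(\partial$-neighborhoods$)$ give no trouble. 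We only need $\supp\mu_n$ confined to the balls.

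The key step, and the main obstacle, is to show that $C_{p_n}(\mu_n)\cap B_i$ contains at most one point. I would work in normal coordinates, or along geodesics. If $y,z\in B_i$ are two distinct contact points of $\mu_n$, consider the minimizing geodesic $\sigma$ from $y$ to $z$, which stays inside the geodesically convex ball $B_i$ once $r$ is below the convexity radius. Along $\sigma$ the function $g_n\circ\sigma$ has second derivative equal to $\Hess F_{p_n,\mu_n}[\dot\sigma,\dot\sigma]\ge h>0$, since $\sigma$ is a geodesic. So $g_n\circ\sigma$ is strictly convex, vanishes at both endpoints, and is nonnegative in between. That is impossible: a strictly convex function with equal values at the endpoints is strictly smaller in the interior, so it would be negative there. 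The care needed is to make the Hessian bound apply along the whole geodesic, which is why I choose $r$ small so that each $B_i$ is geodesically convex and the bound holds uniformly on $B_i$.

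It follows that for large $n$, $\supp\mu_n\subseteq C_{p_n}(\mu_n)$ has at most one point in each $B_i$ and none outside. Hence $\#\supp\mu_n\le k=\#C_{p_0}(\nu)$.
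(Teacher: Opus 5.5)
Your proposal is correct and follows essentially the same route as the paper. Both arguments isolate each contact via the positive-definite Hessian, place disjoint small geodesically convex balls around the finitely many contacts, and use the $C^2$ and energy convergence of Lemma~\ref{lem:c2-limit} to carry both the strict convexity inside the balls and the positive contact gap outside them to $\mu_n$; strict convexity along geodesics then allows at most one contact per ball. You simply write out the Taylor-expansion and geodesic-convexity details that the paper's proof leaves implicit.
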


\begin{proof}
The limiting contact set is compact and every contact is isolated, so
there are finitely many. Take disjoint small geodesically convex balls
around them in which the limiting Hessian remains positive definite.
Lemma~\ref{lem:c2-limit} preserves strict convexity in these balls for
large $n$. On their compact complement the limiting contact gap is
positive; uniform convergence of potentials and minimum energies
preserves that gap as well. Every support point of $\mu_n$ therefore
lies in a ball, and strict geodesic convexity allows at most one global
potential minimum in each ball.
\end{proof}

The use of \emph{all} contacts is essential to this argument. At an unused
contact $z\in C_{p_0}(\nu)$, insertion has no first-order cost:
\begin{equation}\label{eq:unused-contact}
 I_{p_0}((1-t)\nu+t\delta_z)-e(p_0)=t^2(1-e(p_0)).
\end{equation}
Curvature only at the old support does not control vanishing mass near
such a point. The evenness of the potential causes no difficulty:
contacts occur in antipodal pairs, and their Hessians are isometric.

\begin{proof}[Proof of Theorem~\ref{thm:stability}]
Consider the compact set
\[
 \{(\nu,x,v):\nu\in\Min_{p_0},\ x\in C_{p_0}(\nu),\
                   \norm v=1,\ v\perp x\}.
\]
The Hessian value on this set is continuous and, by hypothesis, strictly
positive. It therefore has a positive minimum $h_0$.
If no nearby uniform bound $h_0/2$ existed at all contacts of all
minimizers, we could choose $p_n\to p_0$, $\mu_n\in\Min_{p_n}$, contacts
$x_n$, and unit tangent directions $v_n$ violating it. Compactness and
Lemma~\ref{lem:c2-limit} give a limiting triple in the displayed set and
a Hessian value at most $h_0/2$, a contradiction.

Choose a compact interval $J\subset(2,\infty)$ containing the resulting
parameter neighborhood. Proposition~\ref{prop:packing} below, with the
common Hessian lower bound $h_0/2$, gives a common finite support bound.
\end{proof}

Thus the all-contact nondegeneracy condition is open and supplies an
open interval in $\cE_d$ whenever it holds at a non-even exponent.
The theorem does not identify finite support with that condition and
does not establish openness of $\cE_d$ itself.

\subsection{An explicit separation bound}

\begin{proposition}\label{prop:packing}
Let $J=[a,b]\subset(2,\infty)$ and put
\begin{equation}\label{eq:packing-constants}
 \beta=\min\{1,a-2\},\qquad
 C_J=b(b-1)\bigl(\max\{1,b-2\}+2\bigr)+b^2.
\end{equation}
Suppose $p\in J$, $\mu\in\Min_p$, and
$\Hess F_{p,\mu}(x)\succeq h\,\mathrm{Id}$ for all $x\in\supp\mu$,
where $h>0$. Set
\[
 r=\min\left\{1,\left(\frac{h}{2C_J}\right)^{1/\beta}\right\}.
\]
Distinct support points have spherical distance greater than $r$, and
\begin{equation}\label{eq:packing-bound}
 \#\supp\mu\le\left(1+\frac\pi r\right)^d.
\end{equation}
On $\mathbb S^2$ one also has
$\#\supp\mu\le2/(1-\cos(r/2))$.
\end{proposition}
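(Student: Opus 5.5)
The plan is to prove a H\"older-type modulus of continuity for the Hessian of the potential, combine it with the Hessian lower bound at support points, and conclude that two support points cannot be close. Fix $p\in J$ and $\mu\in\Min_p$. By Lemma~\ref{lem:psd}, $F_{p,\mu}-e(p)\ge0$ on $\Sph$ and vanishes at every support point, so each support point $x$ is a global minimum of the potential.

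Suppose $x\ne y$ are support points at spherical distance $s\le r$. Let $\gamma(t)$ be the unit-speed geodesic from $x$ to $y$, and set $\phi(t)=F_{p,\mu}(\gamma(t))-e(p)$. Then $\phi\ge0$ and $\phi(0)=\phi(s)=0$. Since both endpoints are interior minima along the geodesic, $\phi'(0)=\phi'(s)=0$. Hence $\int_0^s\phi''(t)\,dt=0$, so $\phi''$ must become nonpositive somewhere on $[0,s]$. On the other hand, $\phi''(t)=\Hess F_{p,\mu}(\gamma(t))[\dot\gamma,\dot\gamma]$, because $\gamma$ is a geodesic.

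The key estimate, which I expect to be the main obstacle, is
\[
\bigl|\phi''(t)-\phi''(0)\bigr|\le C_J\,t^{\beta}\qquad(0\le t\le 1).
\]
I would obtain it from formula \eqref{eq:hessian}, or more precisely from its non-contact version. Along $x(t)=x\cos t+v\sin t$, the second derivative of $\int\abs{x(t)\cdot y}^p\,d\mu(y)$ is
\[
\int\Bigl(p(p-1)\abs{x(t)\cdot y}^{p-2}(\dot x(t)\cdot y)^2-p\abs{x(t)\cdot y}^p\Bigr)\,d\mu(y).
\]
I would bound the $t$-variation of each integrand pointwise in $y$ and then integrate against the probability measure $\mu$.
\begin{itemize}
\item The term $\abs{x(t)\cdot y}^{p}$ is Lipschitz in $t$ with constant $p\le b$. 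This contributes the $b^2$ part of $C_J$, after multiplying by $p$.
\item The term $(\dot x\cdot y)^2$ is Lipschitz with constant $2$.
\item The term $\abs{u}^{p-2}$ is $\beta$-H\"older in $u\in[-1,1]$. Its constant is $1$ if $p-2\le1$, by subadditivity of $u\mapsto u^{p-2}$, and $p-2$ if $p-2\ge1$, by the mean value theorem. The inner product $u=x(t)\cdot y$ is itself $1$-Lipschitz in $t$.
\end{itemize}
For $t\le1$ we have $t\le t^\beta$, so the Lipschitz pieces are absorbed into the H\"older bound. Collecting the pieces gives the constant $b(b-1)(\max\{1,b-2\}+2)+b^2=C_J$. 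If the bookkeeping produces a slightly different constant, only $r$ changes.

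With this estimate, $\phi''(t)\ge h-C_Jt^\beta\ge h-C_Jr^\beta\ge h/2>0$ on $[0,s]$, by the choice of $r$. This contradicts $\int_0^s\phi''=0$. Hence distinct support points are at spherical distance greater than $r$. Antipodal support points are at distance $\pi>r$, so they cause no problem.

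For the counting bound, open geodesic balls of radius $r/2$ around the support points are pairwise disjoint. Comparing volumes gives $\#\supp\mu\le \sigma(\Sph)/\sigma(B_{r/2})$. I would use the standard lower bound $\sigma(B_\rho)/\sigma(\Sph)\ge(\rho/\pi)^{d-1}\cdot c$, or more simply a covering argument, to reach $(1+\pi/r)^d$. One option is to pass to the Euclidean picture: the chordal separation is at least $2r/\pi$, and balls of radius $r/\pi$ inside the ball of radius $1+r/\pi$ give at most $(1+\pi/r)^d$ points.

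On $\mathbb S^2$, a cap of angular radius $r/2$ has normalized area $(1-\cos(r/2))/2$. Disjointness then gives $\#\supp\mu\le2/(1-\cos(r/2))$.
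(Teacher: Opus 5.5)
Your proposal is correct and follows the paper's proof essentially step for step. You use the same H\"older modulus $\abs{\phi''(t)-\phi''(0)}\le C_J t^\beta$ along geodesics with the same constant bookkeeping, and the same Euclidean-ball and spherical-cap volume comparisons. The only cosmetic difference is in the separation step: you use the vanishing first derivatives at both endpoints to get $\int_0^s\phi''=0$, whereas the paper integrates from a single support point to conclude that $\phi>0$ on $(0,r]$.
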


\begin{proof}
For a unit-speed great circle $x(s)$, write $t(s)=x(s)\cdot y$.
Then $\abs t,\abs{t'}\le1$, $t''=-t$, and
\[
 \frac{d^2}{ds^2}\abs{t(s)}^p
   =p(p-1)\abs{t(s)}^{p-2}t'(s)^2-p\abs{t(s)}^p.
\]
On $[-1,1]$, the map $t\mapsto\abs t^{p-2}$ is uniformly
$\beta$-H\"older for $p\in J$, with constant at most
$\max\{1,b-2\}$; for $\abs s\le1$ the same estimate follows by
composing with $t(s)$. Also
$\abs{t'(s)^2-t'(0)^2}\le2\abs s$ and
$\abs{\abs{t(s)}^p-\abs{t(0)}^p}\le b\abs s$.
Integration against $\mu$ gives
\[
 \abs{(F_{p,\mu}\circ x)''(s)-(F_{p,\mu}\circ x)''(0)}
       \le C_J\abs s^\beta\qquad(\abs s\le1).
\]
Starting at a support point, the first derivative vanishes and the
second derivative is at least $h$. It remains at least $h/2$ until time
$r$, so another contact cannot occur at any distance in $(0,r]$.

Geodesic separation greater than $r$ gives Euclidean chordal separation
greater than $2r/\pi$. The disjoint Euclidean balls of radius $r/\pi$
centered at the support points lie in the ball of radius $1+r/\pi$;
volume comparison proves \eqref{eq:packing-bound}. On $\mathbb S^2$,
disjoint spherical caps of radius $r/2$ instead give the last bound.
\end{proof}

When $b\ge3$, the constant in \eqref{eq:packing-constants} simplifies to
$C_J=b^3$. The estimate is conditional: no positive lower bound $h$ for
all minimizers at an arbitrary non-even exponent is supplied here.

\section{Finite-support regularization at a fixed exponent}
\label{sec:regularization}

\subsection{A flat-diagonal obstruction}

\begin{lemma}\label{lem:flat-diagonal}
Let $h:[-1,1]\to\R$ be continuous and even, and $C^2$ near $1$.
Suppose
\[
 h(1)=c>0,\qquad h'(1)=0,\qquad
 h'(t)\ne0\quad(1-\delta<t<1)
\]
for some $\delta>0$. If $h(x\cdot y)$ is positive semidefinite on an
infinite compact subset of $\Sph$, these conditions give a contradiction.
\end{lemma}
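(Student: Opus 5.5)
The plan is to turn positive semidefiniteness into Hilbert-space geometry near an accumulation point of $S$, and then play the flatness $h'(1)=0$ against the strict monotonicity of $h$ just below $1$.

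\emph{Step 1: a feature map and the sign of $h'$.} Let $S$ be infinite and compact, and let $x_0\in S$ be an accumulation point. Represent the restricted kernel as $h(x\cdot y)=\langle\Phi(x),\Phi(y)\rangle$ in a real Hilbert space, exactly as in Lemma~\ref{lem:hadamard}. Then $\norm{\Phi(x)}^2=c$ and
\[
 \norm{\Phi(x)-\Phi(y)}^2=2\bigl(c-h(x\cdot y)\bigr).
\]
The $2\times2$ minors give $\abs{h(x\cdot y)}\le c$ for $x,y\in S$. Since $h'$ is continuous and nonzero on $(1-\delta,1)$, it has constant sign there. If $h'<0$, then $h(t)>c$ for $t$ slightly below $1$. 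This contradicts the $2\times2$ bound, because points of $S$ near $x_0$ realize inner products arbitrarily close to $1$ but different from $1$. Hence $h'>0$ on $(1-\delta,1)$.

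\emph{Step 2: distance comparison.} For $x,y\in S$ near $x_0$, put $\varphi(s)^2=2\bigl(c-h(1-s^2/2)\bigr)$, which makes sense because $1-x\cdot y=\abs{x-y}^2/2$. Then $\norm{\Phi(x)-\Phi(y)}=\varphi(\abs{x-y})$, where $\varphi$ is strictly increasing near $0$ and $\varphi(0)=0$. Since $h$ is $C^2$ near $1$ with $h'(1)=0$, we have $c-h(1-\sigma)=O(\sigma^2)$, so $\varphi(s)=O(s^2)$. Thus $\Phi$ is a Hölder-$2$ map on $S$ near $x_0$, and on a small cap the feature metric is a strictly increasing function of the chordal distance.

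\emph{Step 3: the main obstacle.} The difficulty is that $S$ may be totally disconnected. A Hölder-$2$ map on a Cantor-like set need not be constant, so the usual ``Hölder exponent $>1$ implies constant'' argument does not apply directly. I would first exploit the Hilbert-space structure. Choose $x_n\to x_0$ in $S$ with $r_n=\abs{x_n-x_0}$ decreasing so fast that $r_{n+1}\le r_n/4$. Then study the Gram matrix of the increments $w_n=\Phi(x_n)-\Phi(x_0)$, whose entries are
\[
 \langle w_n,w_m\rangle=\tfrac12\bigl(\varphi(r_n)^2+\varphi(r_m)^2-\varphi(\abs{x_n-x_m})^2\bigr).
\]
Strict monotonicity gives $\abs{x_n-x_m}\ge r_n-r_m\ge\tfrac34 r_n$ for $m>n$, which bounds these entries from one side. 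The goal is to find explicit coefficients (alternating signs weighted by $\varphi(r_n)^{-1}$) for which the centered quadratic form is negative. The intended source of negativity is the quadratic, rather than linear, vanishing of $\varphi$: for three collinear-in-scale points this makes $\varphi(\abs{x_n-x_m})^2$ too large relative to $\varphi(r_n)^2+\varphi(r_m)^2$ for the $3\times3$ determinant to remain nonnegative.

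If that attempt stalls, the fallback is to pass to a tangent direction $u=\lim(x_n-x_0)/r_n$ and compare with the one-variable kernel $t\mapsto h(\cos t)$ along the great circle through $x_0$ in direction $u$. There the divided-difference expansion \eqref{eq:divided-det} from the proof of Theorem~\ref{thm:circles} applies to nodes approaching $0$. The second-order coefficient is $\tfrac{d^2}{dt^2}h(\cos t)\big|_{t=0}=-h'(1)=0$, and it sits on the diagonal of the limiting matrix $H$. The limit $H$ is therefore a singular diagonal matrix. The contradiction would then come from the next-order term, whose sign is controlled by $h'>0$ on $(1-\delta,1)$.
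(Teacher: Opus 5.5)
\textbf{There is a genuine gap.} Step~3, where the contradiction has to come from, is never carried out. Steps~1--2 are correct but only preparatory; the sign $h'>0$ is not even needed.

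\emph{The nested-scale $3\times3$ plan does not work as set up.} The only estimate you supply, $\abs{x_n-x_m}\ge\tfrac34r_n$, is a \emph{lower} bound on the feature distance $\varphi(\abs{x_n-x_m})$. That helps the feature triangle inequality $\varphi(r_n)\le\varphi(r_m)+\varphi(\abs{x_n-x_m})$ hold rather than fail. To violate positive semidefiniteness with the triple $x_0,x_m,x_n$, you need $x_m-x_0$ to point roughly along $x_n-x_0$, so that $\abs{x_n-x_m}$ is \emph{smaller} than $r_n$ by an amount of order $r_m$. Even then, at a fixed ratio $r_m/r_n\le\tfrac14$, the comparison $\varphi(r_n)-\varphi(\abs{x_n-x_m})>\varphi(r_m)$ needs quantitative control of $\varphi'$ at the scales $r_n$. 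For example, $\varphi(s)\asymp s^2$, i.e.\ $h''(1)<0$, would suffice, but the hypotheses do not give this.

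\emph{The fallback fails outright.} Points of $S$ need not lie on the great circle through $x_0$ in direction $u$. The identity \eqref{eq:divided-det} depends on exact great-circle nodes, as the paper notes after Theorem~\ref{thm:circles}. Moreover, your ``next-order term'' involves $h''(1)$ and higher derivatives. A $C^2$ hypothesis does not control these, and $h''(1)$ may vanish.

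\textbf{The missing idea} is to test your H\"older-$2$ bound against a \emph{fixed} third feature vector. For fixed $y\in S$, Cauchy--Schwarz gives
\[
 \abs{h(x_n\cdot y)-h(x_0\cdot y)}
 =\abs{\langle\Phi(x_n)-\Phi(x_0),\Phi(y)\rangle}
 \le\sqrt c\,\varphi(r_n)=O(r_n^2).
\]
The left side is the increment of $x\mapsto h(x\cdot y)$, a function defined on the whole sphere and differentiable at $x_0$. Dividing by $r_n$ therefore gives $h'(x_0\cdot y)(u\cdot y)=0$.

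Now take $y=x_j$ with $x_0\cdot x_j\in(1-\delta,1)$. This yields $u\cdot x_j=0$. Since $u\perp x_0$, it follows that $u\cdot(x_j-x_0)/r_j=0$, and letting $j\to\infty$ gives $\norm u^2=0$, a contradiction. This is the paper's proof.

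It is still a three-point argument, but the third point is frozen while $x_n\to x_0$. Thus $r_n/r_j\to0$ as fast as needed, and only $h'\ne0$ is used. It also removes your Cantor-set worry. You never need $\Phi$ to be constant on $S$. You only need that $\langle\Phi(\cdot),\Phi(y)\rangle$ is the restriction of the smooth function $x\mapsto h(x\cdot y)$. Its derivative in direction $u$ is forced to vanish for every $y=x_j$, which is incompatible with $u$ being the limiting direction of those same points.
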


\begin{proof}
Suppose distinct support points $x_n$ converge to $x$. Put
$r_n=\norm{x_n-x}$ and, after passing to a subsequence, assume
$(x_n-x)/r_n\to u$, where $\norm u=1$ and $u\perp x$.
Since $x_n\cdot x=1-r_n^2/2$, Taylor's theorem gives
$c-h(x_n\cdot x)=O(r_n^4)$. A Gram representation of the positive
semidefinite kernel and Cauchy--Schwarz imply, for every fixed support
point $y$,
\begin{equation}\label{eq:flat-feature}
 \abs{h(x_n\cdot y)-h(x\cdot y)}^2
       \le2c\bigl(c-h(x_n\cdot x)\bigr)=O(r_n^4).
\end{equation}
Now fix $y=x_j$ sufficiently close to $x$ that
$x\cdot x_j\in(1-\delta,1)$. Divide the unsquared difference in
\eqref{eq:flat-feature} by $r_n$ and let $n\to\infty$. Differentiability
near this inner product gives
$h'(x\cdot x_j)(u\cdot x_j)=0$, hence $u\cdot x_j=0$.
This holds for every sufficiently large fixed $j$. Therefore
$u\cdot(x_j-x)/r_j=0$ for those $j$, and the limit $j\to\infty$
contradicts $\norm u=1$.
\end{proof}

\begin{proof}[Proof of Theorem~\ref{thm:correction}]
Let $q>p$ be even and put
$h_q(t)=\abs t^p-(p/q)\abs t^q$. This function is nonnegative on
$[-1,1]$, and its minimum energy is therefore nonnegative. By
Lemma~\ref{lem:psd}, its kernel is positive semidefinite on the support
of every minimizer. Near $1$,
\[
 h_q(1)=1-p/q>0,\qquad h_q'(1)=0,\qquad
 h_q'(t)=pt^{p-1}(1-t^{q-p})>0\quad(0<t<1).
\]
Lemma~\ref{lem:flat-diagonal} proves finiteness of each minimizing
support. The uniform difference from $K_p$ has maximum $p/q$, attained
on the diagonal. Since $q$ is even, the correction is a polynomial
kernel of rank at most $D_q$.
\end{proof}

The correction is small in the uniform norm, but not in $C^2$ at the
diagonal: $h_q''(1)=p(p-q)$. Thus its flat-diagonal mechanism does not
persist automatically in the unmodified limiting problem.

\subsection{Which unmodified minimizers are selected?}

In this subsection work on $X=\RP$, so one atom means one unoriented
line. All kernels under consideration descend to $X$, and finite support
on $X$ is equivalent to finite support on the sphere. Define the atomic
purity
\begin{equation}\label{eq:purity}
 A(\mu)=(\mu\otimes\mu)(\Delta_X)
       =\sum_{x\in X}\mu\{x\}^2,
 \qquad a_*(p)=\max_{\mu\in\Min_p}A(\mu).
\end{equation}
The maximum exists: $A$ is upper semicontinuous by the closedness of the
diagonal and the Portmanteau theorem. On the sphere, this same quantity
is the sum of squares of the total masses on antipodal pairs, not the
sum of squares of individual signed-point masses.

Let $\nu_q$ be any minimizer of $L_{p,q}$ and let
$e_{p,q}=I_p(\nu_q)-(p/q)I_q(\nu_q)$.

\begin{proposition}[Maximal-purity selection]\label{prop:purity}
As even $q\to\infty$ with $p$ fixed, every weak cluster point of
$\nu_q$ belongs to $\Min_p$ and has purity $a_*(p)$. Moreover,
\begin{align}
 I_q(\nu_q)&\longrightarrow a_*(p),\label{eq:purity-limit}\\
 I_p(\nu_q)-e(p)&=o(q^{-1}),\label{eq:energy-rate}\\
 e_{p,q}&=e(p)-\frac pq a_*(p)+o(q^{-1}).\label{eq:minimum-rate}
\end{align}
\end{proposition}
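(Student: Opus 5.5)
Throughout we work on $X=\RP$. The key fact is that $I_q(\mu)\to A(\mu)$ as even $q\to\infty$ for every fixed $\mu$. Indeed, $\abs{x\cdot y}^q$ decreases pointwise to $\mathbf 1_{\Delta_X}$ as $q$ increases, so monotone (dominated) convergence gives $I_q(\mu)\downarrow A(\mu)$. Since $\abs{x\cdot y}^q$ is decreasing in $q$, we also have $I_q(\mu)\ge A(\mu)$ for every $q$.

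\emph{Upper bound on $e_{p,q}$.} Fix $\mu_*\in\Min_p$ with $A(\mu_*)=a_*(p)$; it exists by upper semicontinuity. Testing $L_{p,q}$ on $\mu_*$ gives
\[
 e_{p,q}\le e(p)-\tfrac pq I_q(\mu_*)=e(p)-\tfrac pq a_*(p)+o(q^{-1}).
\]

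\emph{Lower bound and energy rate.} Since $I_p(\nu_q)\ge e(p)$,
\[
 e_{p,q}=I_p(\nu_q)-\tfrac pq I_q(\nu_q)\ge e(p)-\tfrac pq I_q(\nu_q).
\]
Combining with the upper bound yields
\[
 0\le I_p(\nu_q)-e(p)\le \tfrac pq\bigl(I_q(\nu_q)-a_*(p)\bigr)+o(q^{-1}).
\]
Since $I_q\le1$, the $o(q^{-1})$ statements in \eqref{eq:energy-rate} and \eqref{eq:minimum-rate} both reduce to showing \eqref{eq:purity-limit}. A first consequence is $I_p(\nu_q)-e(p)=O(1/q)\to0$. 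By continuity of $I_p$ in the weak topology, every cluster point $\nu$ of $\nu_q$ therefore lies in $\Min_p$. The same inequality also gives
\[
 \liminf_q I_q(\nu_q)\ge a_*(p).
\]

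\emph{Limsup of $I_q(\nu_q)$ (main obstacle).} It remains to show $\limsup I_q(\nu_q)\le a_*(p)$; this is the delicate step, because the kernels $\abs{x\cdot y}^q$ change with $q$ while the measures move. Take a subsequence realizing the limsup and converging weakly to some $\nu$. For fixed even $m\le q$, monotonicity gives $I_q(\nu_q)\le I_m(\nu_q)$. The kernel $\abs{x\cdot y}^m$ is continuous, so letting $q\to\infty$ along the subsequence gives
\[
 \limsup_q I_q(\nu_q)\le I_m(\nu).
\]
Now let $m\to\infty$ to get $\limsup_q I_q(\nu_q)\le A(\nu)\le a_*(p)$, using $\nu\in\Min_p$. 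This establishes \eqref{eq:purity-limit}, and hence \eqref{eq:energy-rate} and \eqref{eq:minimum-rate}.

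\emph{Purity of every cluster point.} Finally, let $\nu$ be any cluster point. Applying the same chain of inequalities along a subsequence converging to $\nu$ gives
\[
 a_*(p)=\lim_q I_q(\nu_q)\le A(\nu)\le a_*(p),
\]
so $A(\nu)=a_*(p)$.
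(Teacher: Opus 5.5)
Your proposal is correct and follows essentially the same route as the paper: you compare $\nu_q$ with a purity-maximizing $\mu_*\in\Min_p$ to get $0\le I_p(\nu_q)-e(p)\le\frac pq\bigl(I_q(\nu_q)-I_q(\mu_*)\bigr)$, pass the bound $I_q(\nu_q)\le I_m(\nu_q)$ for fixed even $m$ to the weak limit and let $m\to\infty$ to get $\limsup I_q(\nu_q)\le A(\nu)\le a_*(p)$, and read off the rates from $I_q(\mu_*)\to a_*(p)$. The only cosmetic difference is that you obtain $\liminf_q I_q(\nu_q)\ge a_*(p)$ through an $o(q^{-1})$ error term, whereas the paper uses the exact inequality $I_q(\nu_q)\ge I_q(\mu_*)\ge a_*(p)$ for every $q$.
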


\begin{proof}
Write $e=e(p)$ and $a_*=a_*(p)$. Optimality against any
$\mu\in\Min_p$ gives
\begin{equation}\label{eq:corrected-comparison}
 0\le I_p(\nu_q)-e
 \le\frac pq\bigl(I_q(\nu_q)-I_q(\mu)\bigr)\le\frac pq.
\end{equation}
In particular, all cluster points minimize $I_p$. Choose a
purity-maximizing $\mu_*\in\Min_p$. Since $I_q(\mu_*)\ge A(\mu_*)$,
\eqref{eq:corrected-comparison} gives $I_q(\nu_q)\ge a_*$.

Suppose $\nu_{q_j}\rightharpoonup\nu$. For every fixed $R>0$ and large
$j$, monotonicity of the powers on $[0,1]$ yields
\[
 \limsup_j I_{q_j}(\nu_{q_j})\le
 \lim_j I_R(\nu_{q_j})=I_R(\nu).
\]
As $R\to\infty$, dominated convergence gives $I_R(\nu)\downarrow
A(\nu)$ on projective space. Therefore
\[
 a_*\le\liminf_j I_{q_j}(\nu_{q_j})
 \le\limsup_j I_{q_j}(\nu_{q_j})\le A(\nu)\le a_*.
\]
Compactness proves \eqref{eq:purity-limit} for all choices of minimizers.
Also $I_q(\mu_*)\to a_*$. Applying
\eqref{eq:corrected-comparison} to this fixed $\mu_*$ proves
\eqref{eq:energy-rate}, and the definition of $e_{p,q}$ then gives
\eqref{eq:minimum-rate}.
\end{proof}

The conclusion is about $I_q(\nu_q)$, not necessarily $A(\nu_q)$.
It supplies neither a positive lower bound for $a_*(p)$ nor finiteness of
a limiting support. Even positive purity would only guarantee the
presence of an atom, not a finite number of atoms.

\subsection{A counterexample to passage of finiteness through the limit}

The following example uses the same type of high-even-power correction,
but a different rotationally invariant kernel. It isolates the failure
of the approximation inference, without contradicting the $p$-frame
conjecture.

\begin{proposition}\label{prop:diffuse-limit}
On $\RP$, the kernel $B(x,y)=\exp((x\cdot y)^2-1)$ has a unique
minimizer, namely rotation-invariant probability $\sigma$; this measure
is nonatomic. For every even $q\ge6$, every minimizer of
\[
 B_q(x,y)=B(x,y)-\frac2q\abs{x\cdot y}^q
\]
has finite support. All such minimizing measures converge weakly to
$\sigma$ as $q\to\infty$, and their support cardinalities tend to
infinity.
\end{proposition}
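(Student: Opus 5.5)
The proof has three parts. First, $\sigma$ is the unique minimizer of $B$. Second, every minimizer of $B_q$ has finite support. Third, the minimizers of $B_q$ converge to $\sigma$, which forces their support sizes to grow.

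\emph{Uniqueness of $\sigma$ for $B$.} Expand
\[
 B(x,y)=e^{-1}\sum_{k\ge0}\frac{(x\cdot y)^{2k}}{k!}.
\]
Each term $(x\cdot y)^{2k}$ gives energy $\|\int x^{\otimes 2k}\,d\mu\|^2$. As in the even-exponent discussion of Section~\ref{sec:variations}, averaging over the orthogonal group is an orthogonal projection sending every moment to the moment of $\sigma$. Hence each term is separately minimized by $\sigma$, and equality in every term forces all even moment tensors of $\mu$ to agree with those of $\sigma$. Even polynomials are dense in the continuous functions on $\RP$, so $\mu=\sigma$ on $\RP$. This also shows $B$ is positive semidefinite. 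The measure $\sigma$ is clearly nonatomic.

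\emph{Finite support for $B_q$.} I apply Lemma~\ref{lem:flat-diagonal} with
\[
 h(t)=e^{t^2-1}-\frac2q t^q .
\]
The function $h$ is even and smooth, with
\[
 h(1)=1-\frac2q>0,\qquad h'(t)=2t\bigl(e^{t^2-1}-t^{q-2}\bigr),\qquad h'(1)=0.
\]
Near $1^-$, write $t=1-s$. Then $e^{t^2-1}\approx 1-2s$ and $t^{q-2}\approx 1-(q-2)s$. Since $q\ge6$ gives $q-2>2$, we get $e^{t^2-1}-t^{q-2}>0$ on some interval $(1-\delta,1)$, so $h'\ne0$ there.

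For positive semidefiniteness on a support, the easy route is $h\ge0$, i.e. $e^{t^2-1}\ge (2/q)t^q$ on $[0,1]$. This holds because $e^{t^2-1}\ge e^{-1}$ and $(2/q)t^q\le 1/3<e^{-1}$. The minimal energy is then nonnegative, and Lemma~\ref{lem:psd} makes $h(x\cdot y)$ positive semidefinite on every minimizing support. Lemma~\ref{lem:flat-diagonal} then gives finiteness.

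\emph{Convergence to $\sigma$ and growing support sizes.} We have $\|B_q-B\|_\infty=2/q\to0$, so minimal energies converge. Any weak cluster point of minimizers $\nu_q$ of $B_q$ therefore minimizes $B$, by the same comparison as \eqref{eq:corrected-comparison}. By uniqueness it equals $\sigma$, and compactness gives $\nu_q\rightharpoonup\sigma$.

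Suppose the supports of some subsequence had cardinality bounded by $N$ (counted on $\RP$). Measures on $\RP$ with at most $N$ atoms form a weakly closed set: pass to convergent atom locations and weights. The limit $\sigma$ would then be atomic, a contradiction. Hence the cardinalities tend to infinity.

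\emph{Main obstacle.} The delicate step is uniqueness for $B$. It needs the moment-determinacy argument together with the fact that equality in the sum forces equality in each projected term. Everything else is routine sign checking on $h$.
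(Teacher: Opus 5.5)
Your proof is correct and follows the paper's argument: the same tensor-moment expansion with Stone--Weierstrass for uniqueness, Lemmas~\ref{lem:psd} and~\ref{lem:flat-diagonal} applied to $e^{t^2-1}-(2/q)t^q$ using $2/q\le 1/3<e^{-1}$, and weak compactness of measures with boundedly many atoms. The differences are cosmetic. The paper gets uniqueness from strict positivity of $I_B$ on nonzero signed measures plus constancy of $F_{B,\sigma}$, which is the summed form of your termwise Pythagorean identity for the $O(d)$-average. The paper also checks that the derivative is positive on all of $(0,1)$ via $\log u<u-1$, whereas your first-order expansion near $t=1$ already suffices for the lemma.
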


\begin{proof}
For any finite real signed measure $\eta$ on projective space,
\[
 I_B(\eta)=\mathrm e^{-1}\sum_{k\ge0}\frac1{k!}
       \left\|\int x^{\otimes2k}\,d\eta(x)\right\|^2.
\]
The series is uniformly convergent at the kernel level. If its value is
zero, all even polynomial moments vanish. Even polynomials separate
projective points, so Stone--Weierstrass and uniqueness of measures imply
$\eta=0$. Thus the quadratic form is strictly positive on nonzero signed
measures. Rotation invariance makes $F_{B,\sigma}$ constant; for a
probability $\mu\ne\sigma$,
\[
 I_B(\mu)-I_B(\sigma)=I_B(\mu-\sigma)>0.
\]

For even $q\ge6$, put $b_q(t)=\mathrm e^{t^2-1}-(2/q)t^q$.
It is positive on $[-1,1]$, since
$b_q\ge\mathrm e^{-1}-2/q>0$, and $b_q(1)=1-2/q>0$.
Also $b_q'(1)=0$, while
\[
 b_q'(t)=2t\bigl(\mathrm e^{t^2-1}-t^{q-2}\bigr)>0
       \qquad(0<t<1).
\]
Indeed, $(q-2)\log t\le2\log t<t^2-1$ there. Lemmas
\ref{lem:psd} and \ref{lem:flat-diagonal} prove finite support of every
$B_q$-minimizer. Uniform convergence $B_q\to B$ and uniqueness of the
$B$-minimizer force convergence to $\sigma$. Measures with at most a
fixed number of atoms form a weakly compact set, so no subsequence of
these support cardinalities can remain bounded.
\end{proof}

\section{Approximation, selection, and recovery}
\label{sec:recovery}

\subsection{Exponent perturbations and their selection rule}

On every compact positive exponent interval, the kernels are locally
Lipschitz in the exponent. If $p,q\ge\alpha>0$, then
\begin{equation}\label{eq:exponent-lipschitz}
 \norm{K_p-K_q}_\infty\le\frac{\abs{p-q}}{\mathrm e\alpha},
\end{equation}
because $\sup_{0\le t\le1}t^\alpha\abs{\log t}=1/(\mathrm e\alpha)$.
Thus $e(p)$ is locally Lipschitz, and every weak limit of exact minimizers
at exponents tending to $p$ lies in $\Min_p$. This assertion neither
preserves support cardinality nor says that every member of $\Min_p$
can be recovered in that way.

To make the latter issue precise, set
\[
 J_p(\mu)=\iint\abs{x\cdot y}^{p}\log\abs{x\cdot y}
                    \,d\mu(x)\,d\mu(y),
\]
with the integrand defined as zero at $x\cdot y=0$.

\begin{proposition}[One-sided selection]\label{prop:selection}
If $h_n\downarrow0$ and $\mu_n\in\Min_{p+h_n}$, every weak limit
minimizes $J_p$ over $\Min_p$. Limits from exponents $p-h_n$ maximize
$J_p$ over $\Min_p$. Moreover,
\[
 e'_+(p)=\min_{\Min_p}J_p,\qquad
 e'_-(p)=\max_{\Min_p}J_p.
\]
\end{proposition}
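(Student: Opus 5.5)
The plan is the standard envelope (Danskin) argument for a minimum of a family of functionals that depend differentiably on a parameter, run over the compact set $\Prob(\Sph)$. The first step is a uniform first-order expansion in the exponent. For $t\in(0,1]$ and $p$ fixed,
\[
 t^{p+h}=t^p+h\,t^p\log t+R_h(t),\qquad \abs{R_h(t)}\le C h^2,
\]
uniformly in $t\in[0,1]$ for $\abs h$ small. This holds because $\partial_p^2 t^p=t^p(\log t)^2$ is bounded on $[0,1]$ for $p$ in a compact subset of $(0,\infty)$, by the same estimate as in \eqref{eq:exponent-lipschitz}. Integrating against $\mu\otimes\mu$ gives
\[
 I_{p+h}(\mu)=I_p(\mu)+hJ_p(\mu)+O(h^2),
\]
uniformly in $\mu\in\Prob(\Sph)$. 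The kernel $\abs{x\cdot y}^p\log\abs{x\cdot y}$ is continuous, with value zero on $x\cdot y=0$, so $J_p$ is weakly continuous.

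Next I would prove the upper bounds. Fix any $\mu\in\Min_p$ and use it as a competitor at exponent $p\pm h$:
\[
 e(p+h)\le e(p)+hJ_p(\mu)+O(h^2).
\]
Minimizing over $\mu\in\Min_p$ gives $\limsup_{h\downarrow0}(e(p+h)-e(p))/h\le\min_{\Min_p}J_p$. The minimum exists because $\Min_p$ is weakly compact and $J_p$ is continuous. On the left side, dividing by $-h$ reverses the inequality and yields $\liminf_{h\downarrow0}(e(p)-e(p-h))/h\ge\max_{\Min_p}J_p$.

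For the reverse bounds, take $\mu_n\in\Min_{p+h_n}$ and use it as a competitor at exponent $p$:
\[
 e(p)\le I_p(\mu_n)=e(p+h_n)-h_nJ_p(\mu_n)+O(h_n^2).
\]
This gives $(e(p+h_n)-e(p))/h_n\ge J_p(\mu_n)-O(h_n)$. Pass to a subsequence with $\mu_n\rightharpoonup\nu$. By the Lipschitz remark after \eqref{eq:exponent-lipschitz}, $\nu\in\Min_p$, and by continuity $J_p(\mu_n)\to J_p(\nu)\ge\min_{\Min_p}J_p$.

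Combining the two bounds, every subsequence has $\liminf$ of the difference quotient at least $\min J_p$, while the $\limsup$ is at most $\min J_p$. Since this holds along every sequence $h_n\downarrow0$, the right derivative exists and equals $\min_{\Min_p}J_p$. Running the same chain again also gives
\[
 J_p(\nu)\le\limsup_n\frac{e(p+h_n)-e(p)}{h_n}\le\min_{\Min_p}J_p,
\]
so every weak limit $\nu$ minimizes $J_p$ over $\Min_p$. The left-hand case is symmetric. Dividing by $-h_n<0$ flips every inequality, which gives $e'_-(p)=\max_{\Min_p}J_p$ and shows that limits from the left maximize $J_p$.

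The only point requiring care is the uniform $O(h^2)$ remainder near $t=0$, where $\log t$ blows up. I would handle it with the bound $\sup_{t\in[0,1]}t^{\alpha}(\log t)^2=4/(\mathrm e\alpha)^2$ for $\alpha=p-\abs h>0$, applied through Taylor's theorem with the second derivative evaluated at an intermediate exponent. This is why $p>0$ is required and why the estimate is uniform over all measures.
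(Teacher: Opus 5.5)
Your proposal is correct and follows essentially the same envelope argument as the paper. Both use the uniform expansion $I_{p+h}=I_p+hJ_p+O(h^2)$ from boundedness of $t^q(\log t)^2$, an upper bound obtained by testing with a member of $\Min_p$, a lower bound from the expansion at $\mu_n$ together with $I_p(\mu_n)\ge e(p)$, and weak compactness to make the two agree; the only cosmetic difference is that the paper obtains $J_p$-minimality of limits by comparing $\mu_n$ directly with a fixed $\nu\in\Min_p$ at exponent $p+h_n$, whereas you route it through the difference quotient of $e$.
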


\begin{proof}
Boundedness of $t^q\abs{\log t}^2$ for $q$ in a compact positive
interval gives the expansion
\[
 I_{p+h}(\rho)=I_p(\rho)+hJ_p(\rho)+O(h^2)
\]
uniformly over probability measures. Compare $\mu_n$ with any fixed
$\nu\in\Min_p$, use $I_p(\mu_n)\ge e(p)$, divide by $h_n>0$, and
pass to a weak limit. This gives $J_p(\mu)\le J_p(\nu)$; a negative
increment reverses the inequality. For the derivative formula from the
right, comparison with a $J_p$-minimizer supplies the upper bound, while
the expansion at $\mu_n$ and $I_p(\mu_n)\ge e(p)$ supply the lower
bound. Compactness makes them agree. The left derivative is analogous.
\end{proof}

For any fixed $N$, the set $\Prob_{\le N}(X)$ of probability measures
with at most $N$ atoms is weakly compact: it is the image of the compact
space $\Delta_{N-1}\times X^N$ under
$(w,x)\mapsto\sum_iw_i\delta_{x_i}$. Therefore a uniformly bounded
sequence of exact finite minimizers proves existence of a finite
minimizing limit. Without a recovery argument, it does not prove that a
prescribed co-minimizer is finite. Also, an unbounded sequence of support
sizes need not have an infinite-support limit; atoms may coalesce or
lose mass.

\subsection{Recovering a prescribed minimizer}

The following result addresses selection, but intentionally leaves
support bounds as a separate question.

\begin{theorem}[Target-centered recovery]\label{thm:recovery}
Let $X$ be compact metric and $K_n,K$ continuous symmetric kernels with
$\delta_n=\norm{K_n-K}_\infty\to0$. Fix any minimizer $\mu$ of $I_K$.
There is a continuous positive semidefinite kernel $H_\mu$ such that,
for every $\varepsilon_n>0$ with
$\varepsilon_n\to0$ and $\delta_n/\varepsilon_n\to0$, every choice of
minimizers $\nu_n$ of $I_{K_n+\varepsilon_nH_\mu}$ satisfies
$\nu_n\rightharpoonup\mu$.
\end{theorem}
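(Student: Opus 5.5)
We construct $H_\mu$ so that the quadratic form $I_{H_\mu}(\nu-\mu)$ metrizes weak convergence on $\Prob(X)$, and then compare $\nu_n$ with the fixed competitor $\mu$. Choose a countable family $\{\phi_k\}_{k\ge1}$ of continuous functions on $X$ with $\norm{\phi_k}_\infty\le1$ that separates finite signed measures; for instance, a dense subset of the unit ball of $C(X)$ works, since $X$ is compact metric. Set
\[
 H(x,y)=\sum_{k\ge1}2^{-k}\phi_k(x)\phi_k(y).
\]
This series converges uniformly, so $H$ is continuous, symmetric and positive semidefinite. Moreover
\[
 I_H(\eta)=\sum_k2^{-k}\Bigl(\int\phi_k\,d\eta\Bigr)^2
\]
for every finite signed measure $\eta$, and $I_H(\eta)=0$ forces $\eta=0$. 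To make the penalty target-centered, put
\[
 H_\mu(x,y)=H(x,y)-F_{H,\mu}(x)-F_{H,\mu}(y)+I_H(\mu),
\]
where $F_{H,\mu}(x)=\int H(x,z)\,d\mu(z)$. For probability measures $\nu$ this gives $I_{H_\mu}(\nu)=I_H(\nu-\mu)$, since the added terms integrate against $\nu\otimes\nu$ to $-2I_H(\nu,\mu)+I_H(\mu)$. In the feature picture, $H_\mu$ is the Gram kernel of $\phi(x)-\int\phi\,d\mu$, so it is still continuous and positive semidefinite. It depends only on $\mu$, not on $n$ or $\varepsilon_n$.

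Next we carry out the comparison. Let $\nu_n$ minimize $I_{K_n}+\varepsilon_nI_{H_\mu}$. Testing against $\mu$, which has zero penalty, gives
\[
 I_{K_n}(\nu_n)+\varepsilon_nI_H(\nu_n-\mu)\le I_{K_n}(\mu).
\]
On probability measures, $\abs{I_{K_n}-I_K}\le\delta_n$. Together with $I_K(\nu_n)\ge I_K(\mu)$, which holds because $\mu$ minimizes $I_K$, this yields
\[
 \varepsilon_nI_H(\nu_n-\mu)\le I_{K_n}(\mu)-I_{K_n}(\nu_n)\le I_K(\mu)-I_K(\nu_n)+2\delta_n\le2\delta_n.
\]
Hence $I_H(\nu_n-\mu)\le2\delta_n/\varepsilon_n\to0$.

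Finally we conclude by compactness. Let $\nu$ be any weak cluster point of $(\nu_n)$. Because $H$ is continuous on $X\times X$ and $\nu_n\otimes\nu_n\rightharpoonup\nu\otimes\nu$, the quantity $I_H(\nu_n-\mu)$ converges to $I_H(\nu-\mu)$ along the subsequence. That limit is therefore $0$, so $\nu=\mu$ by strict positivity of $I_H$ on nonzero signed measures. Weak compactness of $\Prob(X)$ then gives $\nu_n\rightharpoonup\mu$ along the full sequence. The hypothesis $\varepsilon_n\to0$ is not needed for this convergence; it only ensures that $K_n+\varepsilon_nH_\mu$ stays uniformly close to $K$.

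The main obstacle is the construction in the first step: the penalty must be continuous and must be strictly positive definite on all zero-mass signed measures, not merely on the span of some finite feature map. A penalty that failed this would leave co-minimizers $\nu\ne\mu$ with $I_H(\nu-\mu)=0$, and the argument would not single out $\mu$. The countable separating family handles this; the only thing to verify is the separation claim, which holds because such a family determines $\int f\,d\eta$ for every $f\in C(X)$ by density, and a finite signed measure is determined by these integrals.
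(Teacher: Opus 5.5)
Your proposal is correct and follows essentially the same route as the paper. Both arguments build $H$ from a countable dense family in the unit ball of $C(X)$ with weights $2^{-k}$, center it at $\mu$ so that $I_{H_\mu}(\nu)=I_H(\nu-\mu)$, compare $\nu_n$ with $\mu$ to get $\varepsilon_n I_H(\nu_n-\mu)\le 2\delta_n$, and conclude by weak compactness and the separation property.
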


\begin{proof}
Choose a countable dense family $(\phi_j)$ in the unit ball of $C(X)$ and
let
\[
 H(x,y)=\sum_{j\ge1}2^{-j}\phi_j(x)\phi_j(y).
\]
The series converges uniformly and defines a continuous positive
semidefinite kernel. The quantity
\[
 D_H(\rho,\mu)^2=I_H(\rho-\mu)
 =\sum_{j\ge1}2^{-j}\left(\int\phi_j\,d(\rho-\mu)\right)^2
\]
vanishes only when $\rho=\mu$. Center its feature map at $\mu$:
\begin{align*}
 H_\mu(x,y)={}&H(x,y)-\int H(x,z)\,d\mu(z)
             -\int H(z,y)\,d\mu(z)+I_H(\mu).
\end{align*}
This is again positive semidefinite and satisfies
$I_{H_\mu}(\rho)=D_H(\rho,\mu)^2$ for probability measures.
If $e=I_K(\mu)$, comparison of $\nu_n$ with $\mu$ gives
\[
 e-\delta_n+\varepsilon_n D_H(\nu_n,\mu)^2
 \le I_{K_n}(\nu_n)+\varepsilon_nD_H(\nu_n,\mu)^2
 \le I_{K_n}(\mu)\le e+\delta_n.
\]
Consequently
\begin{equation}\label{eq:recovery-estimate}
 \varepsilon_nD_H(\nu_n,\mu)^2\le2\delta_n.
\end{equation}
By compactness, continuity of $H$, and separation of measures by the
$\phi_j$, every cluster point is $\mu$.
\end{proof}

\begin{corollary}\label{cor:recovery-bound}
Under the hypotheses of Theorem~\ref{thm:recovery}, if one can choose
recovering minimizers with a common bound
$\#\supp\nu_n\le N_\mu<\infty$, then
$\#\supp\mu\le N_\mu$. If this hypothesis holds for every prescribed
minimizer $\mu$, then every minimizer of $I_K$ is finitely supported.
\end{corollary}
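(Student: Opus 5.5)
The first assertion follows from Theorem~\ref{thm:recovery} combined with the weak compactness of $\Prob_{\le N}(X)$ noted just before that theorem. Fix $\mu$ and the kernel $H_\mu$ supplied by Theorem~\ref{thm:recovery}. Choose any admissible $\varepsilon_n$, for instance $\varepsilon_n=\sqrt{\delta_n}$; if $\delta_n=0$ for some $n$, use $\varepsilon_n=1/n$ instead. By hypothesis there are recovering minimizers $\nu_n$ of $I_{K_n+\varepsilon_nH_\mu}$ with $\#\supp\nu_n\le N_\mu$. Theorem~\ref{thm:recovery} gives $\nu_n\rightharpoonup\mu$ for every choice of minimizers, and in particular for these bounded ones.

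Each $\nu_n$ lies in $\Prob_{\le N_\mu}(X)$. This set is weakly closed because it is the continuous image of the compact space $\Delta_{N_\mu-1}\times X^{N_\mu}$. So the weak limit $\mu$ also lies in $\Prob_{\le N_\mu}(X)$, which gives $\#\supp\mu\le N_\mu$.

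To make this concrete, write $\nu_n=\sum_{i\le N_\mu}w_{n,i}\delta_{x_{n,i}}$, padding with zero weights where needed. Pass to a subsequence along which $(w_n,x_n)\to(w,x)$. Then $\nu_n\rightharpoonup\sum_iw_i\delta_{x_i}$, and uniqueness of weak limits identifies this measure with $\mu$. Coalescing atoms or atoms whose weight vanishes can only lower the count. This is exactly why the bound passes to the limit as an inequality and not as an equality.

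The second assertion follows by applying the first to each minimizer $\mu$ separately, which shows that every minimizer has finite support. The constant $N_\mu$ is allowed to depend on $\mu$, so no uniform bound across minimizers is claimed.

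The only real care needed is that the hypothesis concerns a particular choice of minimizers $\nu_n$. Theorem~\ref{thm:recovery} applies to every choice, so it covers the chosen bounded ones, and no further step is delicate.
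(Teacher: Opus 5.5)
Your proposal is correct and is essentially the paper's proof, which applies weak compactness (hence closedness) of $\Prob_{\le N_\mu}(X)$ to the convergence $\nu_n\rightharpoonup\mu$ given by Theorem~\ref{thm:recovery}. Your explicit choice $\varepsilon_n=\sqrt{\delta_n}$ is unnecessary, and strictly you should use whatever admissible sequence the hypothesis supplies the bounded minimizers for; the argument works verbatim for any such sequence.
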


\begin{proof}
Apply weak compactness of $\Prob_{\le N_\mu}(X)$ to the recovery theorem.
\end{proof}

For the sphere-power family, one may take $K_n=K_{p_n}$ with rational
$p_n\to p$ and, for instance,
$\varepsilon_n=\abs{p_n-p}^{1/2}$ when $p_n\ne p$.
Equation~\eqref{eq:exponent-lipschitz} verifies the required ratio.
However, Theorem~\ref{thm:rational} concerns the \emph{unmodified} rational
kernels, not the target-centered kernels in this construction. A uniform
support bound for the latter remains an additional input. Likewise,
Theorem~\ref{thm:irrational} reduces irrational discreteness to a rank
question, and Theorem~\ref{thm:stability} reduces robust local finiteness
to all-contact nondegeneracy; neither missing hypothesis follows from
finite approximation alone.

\section*{Acknowledgments}
The author acknowledges OpenAI Astra for assistance with mathematical
analysis, proof auditing, and manuscript preparation. This assistance does
not constitute independent peer review. Responsibility for the
mathematical claims and bibliographic choices remains with the author.

\bibliographystyle{amsplain}
\bibliography{references}
\end{document}